\documentclass{amsart} 
\usepackage{amsthm} 
\usepackage{amsmath} 
\usepackage{amsfonts} 
\usepackage{ifpdf}
\ifpdf
   \usepackage{url}
   \usepackage[pdftex]{graphicx}
   \DeclareGraphicsRule{*}{mps}{*}{} 
    \usepackage[pdftex,
bookmarks=true,%                   %%% generate bookmarks ...
bookmarksnumbered=true,%           %%% ... with numbers
hypertexnames=false,%              %%% needed for correct links to figures !!!
breaklinks=true,%                  %%% breaks lines, but links are very small
linkbordercolor={0 0 1},%          %%% blue frames around links
pdfborder={0 0 1}]{hyperref}%  %%% border-width of frames  
 \else
   \usepackage{graphicx}
 \fi
\renewcommand{\c}[1]{\lceil #1 \rceil} 
\newcommand{\f}[1]{\lfloor #1 \rfloor}         
\newcommand{\ch}{\mbox {\bf 1}}

\newcommand{\R}{{\mathbb{R}}} %  
  
\newcommand{\Z}{{\mathbb{Z}}}  
\newcommand{\N}{{\mathbb{N}}}  
  
\newcommand{\ol}[1]{\overline{#1}}

\newcommand{\cal}{\mathcal} 
\newcommand{\sign}{\text{sgn}} 
\newtheorem{itdefinition}{Definition}  
  
\newtheorem*{itdefinition*}{Definition}  
\newenvironment{definition*}{\begin{itdefinition*}\rm}{\end{itdefinition*}}  \newtheorem{lemma}{Lemma}  
\newtheorem{theorem}{Theorem}  
\newtheorem*{theorem*}{Theorem}  
\newtheorem{itremark}{Remark}  
  
\newtheorem*{itremark*}{Remark}  
\newtheorem*{corollary*}{Corollary} 
\newenvironment{remark*}{\begin{itremark*}\rm}{\end{itremark*}}  
\newtheorem{prop}{Proposition}  
\newtheorem{itassum}{Assumption}
  
\newtheorem{corollary}{Corollary}  
\newtheorem*{itexample*}{Example}  
\newenvironment{example}{\begin{itexample*}\rm}{\end{itexample*}}

\title[Large Deviations for Partition Functions]{Large Deviations for Partition Functions of Directed Polymers and Some Other Models in an IID Field}
\author{Iddo Ben-Ari}
\address{Department of Mathematics, University of California - Irvine, Irvine, CA 92697, USA}
\ifpdf
\email{\href{mailto:ibenari@math.uci.edu}{ibenari@math.uci.edu}}
\else
\email{ibenari@math.uci.edu}
\fi
\begin{document} 
\begin{abstract}
Consider the partition function of a directed polymer in dimension $d\ge1$ in an IID field. We assume that both  tails of the  negative and the positive part of the field are at least as light as exponential. It is a well-known fact that the  free energy of the polymer is equal to a deterministic constant for almost every realization of the field and that the upper tail of the large deviations is exponential. The lower tail of the large deviations is typically lighter than exponential. In this paper we provide a method to obtain estimates on the rate of decay of the lower tail of the large deviations, which are sharp up to multiplicative constants. As a consequence, we show that the lower tail of the large deviations exhibits three regimes, determined according to the tail of the negative part of the field. Our method is simple to apply and can be used to cover other oriented and non-oriented models including first/last-passage percolation and the parabolic Anderson model.  
\end{abstract}
\maketitle
\date{\today}
\section{Introduction and Statement of Results} 
Let  $V \equiv \{V(t,x):(t,x)\in  \Z_+\times \Z^d\}$ denote an IID field under a probability measure $Q$. We will denote the corresponding expectation operator by $Q$ as well. We will always assume that 
\begin{enumerate}
\item[(AS0)] $V(0,0)$ is non-degenerate;
\item[(AS1)] $Q(V(0,0))=0$;
\item[(AS2)] There exists some constant $\ol{\eta}>0$ such that $Q(e^{\eta V(0,0)})<\infty$ for all $|\eta|<\ol{\eta}$. 
\end{enumerate}
 We remark that the  assumption (AS1) was made only for convenience and does not affect the generality of the results. \par
 Let $|\cdot|$ denote the $l^1$-norm on $\Z^d$, that is the sum of the absolute values of the coordinates. We let $\gamma$ denote a simple symmetric nearest neighbor random walk path on $\Z^d$. In other words, $\gamma:\Z_+\to \Z^d$,  satisfying $|\gamma(t+1)-\gamma(t)|=1$ for all $t\ge 0$. For $x\in \Z^d$, let $P_x$ denote the probability measure induced by the random walk with $P_x(\gamma(0)=x)=1$.  Let $E_x$ denote the corresponding expectation. Define the partition function  $Z(T)$  by letting:  
 $$Z(T)=E_0 e^{H_{\gamma}(T)},\text{ where }H_{\gamma}(T)=\sum_{t=0}^{T-1} V(t,\gamma(t))).$$ 
  Below, we will usually omit the dependence on $\gamma$ and write $H(T)$ meaning  $H_{\gamma}(T)$. Being an expectation of an exponential function, the essential  contribution to $Z(T)$ is  from paths maximizing $H(T)$. Let $\zeta(T)=\sup_{\gamma}H_{\gamma}(T)$, the supremum taken over all paths $\gamma$ with $\gamma(0)=0$. Thus, $Z(T)$ can be thought of as a ``penalized'' version of $e^{\zeta(T)}$. Due this observation, there is a complete analogy between the behavior of $Z(T)$ and of $\zeta(T)$, at least from the point of view of the results below and all are also valid for $\zeta$ with the appropriate minor changes. We remark that $\zeta$ is a model of oriented last-passage site percolation.  For the purpose of making this presentation more simple, we have chosen to discuss $\zeta$ rather than $Z$.\par
 For positive functions $q,r:\R_+\to (0,\infty)$ or $q,r:\Z_+\to \R_+$,  we say that  $q\sim r$  if
 $$0< \liminf_{t\to\infty}  \frac{q(t)}{r(t)}\le \limsup_{t\to\infty} \frac{q(t)}{r(t)}<\infty.$$  
 Clearly,  $\sim$ is an equivalence relation.
A fundamental result is the following: 
 \begin{theorem} ~
\label{th:subadditive}
 \begin{enumerate}
 \item \label{subadd1}
 There exists a constant $\lambda\in [0,\infty)$ such that 
  $$\lambda = \liminf_{T\to\infty} \frac 1 T \ln Z(T)=\limsup_{T\to\infty} \frac 1T \ln Z(T),~Q\text{-almost surely.}$$ 
 \item \label{subadd2} There exists $\epsilon_0 \in (0,\infty]$ such that for every $\epsilon \in (0,\epsilon_0)$, 
$$ -\ln Q(Z(T)\ge e^{(\lambda+\epsilon)T})\sim T.$$
 \end{enumerate} 
\end{theorem}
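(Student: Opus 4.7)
My plan splits the theorem into Part (i), the existence of the deterministic limit $\lambda$, and Part (ii), the exponential upper tail.

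For Part (i) the route is classical. First I would show that $a_T := Q(\ln Z(T))$ is superadditive. The Markov property of the path at time $T$, combined with translation invariance of the field, gives
\[
Z(T+S) = Z(T)\,\mu_{T,0}\bigl[Z^{(T)}_{\gamma(T)}(S)\bigr],
\]
where $\mu_{T,0}(\gamma) \propto P_0(\gamma)e^{H_{\gamma}(T)}$ is the quenched polymer measure and $Z^{(T)}_x(S)$ is the partition function started at $x$ using the time-shifted field $\{V(T+r,\cdot)\}_{r<S}$. Taking $\ln$, applying Jensen with respect to $\mu_{T,0}$, then integrating in $Q$ (using independence of the two time blocks and translation invariance, so $Q(\ln Z^{(T)}_x(S)) = a_S$ regardless of $x$), yields $a_{T+S}\ge a_T+a_S$. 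Two additional Jensen applications, one to the path expectation and one to the field expectation, produce $0\le a_T \le T\ln Q(e^{V(0,0)})$ under (AS1) and (AS2). Fekete's lemma then identifies $\lambda := \lim_T a_T/T \in [0,\infty)$, and gives $a_T \le T\lambda$ for every $T$.

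To upgrade convergence in mean to almost-sure convergence, I would establish an exponential deviation estimate of the form
\[
Q\bigl(|\ln Z(T) - a_T|\ge uT\bigr) \le Ce^{-c(u)T}, \quad u > 0 \text{ small,}
\]
with $c(u) > 0$. Two ingredients drive the argument: first, the identity $\partial\ln Z(T)/\partial V(t,x) = \mu_{T,0}(\gamma(t)=x)\in[0,1]$ with $\sum_x\mu_{T,0}(\gamma(t)=x)=1$, which makes $\ln Z(T)$ a $1$-Lipschitz functional of the field at each time slice; second, (AS2), which controls the conditional cumulant-generating function of the Doob-martingale increments $M_s - M_{s-1}$ of $M_s = Q(\ln Z(T)\mid \mathcal{F}_s)$, where $\mathcal{F}_s=\sigma(V(t,\cdot):t<s)$. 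Summing the bound across $s=1,\dots,T$ and applying Chernoff yields the display above, and a standard Borel--Cantelli argument then finishes Part (i).

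For Part (ii), the upper bound $Q(Z(T)\ge e^{(\lambda+\epsilon)T})\le Ce^{-c(\epsilon)T}$ follows immediately from the concentration estimate together with $a_T\le T\lambda$. For the matching lower bound, a naive single-path bound $Z(T)\ge (2d)^{-T}e^{H_{\gamma_0}(T)}$ is wasteful (it loses $\ln(2d)$ per step); I would use a confinement construction instead. Let $V^*$ denote the essential supremum of $V(0,0)$, and note that $V^*\in(\lambda,\infty]$ since $\lambda \le \ln Q(e^{V(0,0)}) < V^*$ by strict Jensen and (AS0). Fix $v\in(\lambda,V^*)$, set $B_L:=\{x\in\Z^d:|x|\le L\}$, and consider
\[
A_{T,L,v} := \bigl\{V(t,x)\ge v \text{ for every }(t,x)\in[0,T)\times B_L\bigr\},
\]
which has $Q$-probability at least $e^{-c_v |B_L| T}$ by independence, where $c_v = -\ln Q(V(0,0)\ge v)<\infty$. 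On $A_{T,L,v}$, every path confined to $B_L$ satisfies $H_\gamma(T)\ge vT$, so
\[
Z(T) \ge e^{vT}\, P_0\bigl(\gamma([0,T))\subset B_L\bigr) \ge c(L)\,e^{(v-\kappa_L)T},
\]
where $\kappa_L$ is the principal Dirichlet eigenvalue of the random walk on $B_L$, satisfying $\kappa_L\to 0$ as $L\to\infty$. Choosing $L=L(v,\epsilon)$ large enough that $\kappa_L < v-\lambda-\epsilon$, and then taking $v\uparrow V^*$, gives $Q(Z(T)\ge e^{(\lambda+\epsilon)T})\ge e^{-C(\epsilon)T}$ whenever $\epsilon\in(0,V^*-\lambda)$, so $\epsilon_0 := V^* - \lambda \in (0,\infty]$ works. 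The main technical obstacle is the concentration bound in Part (i): because (AS2) provides only exponential (not sub-Gaussian) moments, the martingale-difference estimate does not reduce to off-the-shelf Azuma and requires a Herbst- or Efron--Stein-style tracking of conditional log-moment generating functions under one-site modifications of the field. Once that concentration is in place, both halves of Part (ii) and the almost-sure convergence in Part (i) fall out cleanly.
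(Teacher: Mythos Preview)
The paper does not actually prove Theorem~\ref{th:subadditive}; it refers to \cite[Proposition~1.5]{CSY} for part~(i) and to \cite[Theorem~2.11]{CMS} for part~(ii), remarking only that the latter goes through a \emph{percolation argument}. So there is no in-paper proof to compare against, and your proposal is really a self-contained sketch of a known route.

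Your outline is sound and close to what one finds in the polymer literature. Superadditivity of $a_T=Q(\ln Z(T))$ plus a concentration inequality of Liu--Watbled type gives both the almost-sure limit in~(i) and the upper half of~(ii); the tube/confinement construction you describe for the lower half of~(ii) is correct and yields $\epsilon_0\ge V^*-\lambda>0$. This last step is somewhat different in spirit from the percolation argument the paper cites: your construction forces an entire space--time slab to be uniformly favourable and pays a single exponential cost, whereas a block-percolation argument only asks for a connected chain of ``good'' blocks and is more robust (it adapts directly to continuous-time models such as the parabolic Anderson model, which is why \cite{CMS} uses it). For the present discrete model your simpler argument is perfectly adequate.

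One small gap to watch: you invoke $a_T\le T\ln Q(e^{V(0,0)})$ and $\lambda\le\ln Q(e^{V(0,0)})<V^*$, but (AS2) only guarantees $Q(e^{\eta V(0,0)})<\infty$ for $|\eta|<\bar\eta$, and nothing prevents $\bar\eta\le 1$. If $Q(e^{V(0,0)})=\infty$ those inequalities are vacuous. The fix is easy: use instead $a_T\le \eta^{-1}\ln Q(Z(T)^\eta)\le \eta^{-1}T\ln Q(e^{\eta V(0,0)})$ for some $\eta\in(0,\bar\eta)$, or bound $\lambda$ by the last-passage constant via a union bound over the $(2d)^T$ paths. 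Likewise, $V^*>\lambda$ holds in general: if $Q(e^{V(0,0)})=\infty$ then $V^*=\infty>\lambda$, and otherwise strict Jensen applies as you wrote. With this adjustment your argument goes through under (AS0)--(AS2) as stated.
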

 Note that $\lambda \ge 0$ due to (AS1).  
 The proof of the theorem is essentially due to subadditive arguments. For a proof of part (\ref{subadd1}), we refer the reader to \cite[Proposition 1.5]{CSY}, where also (\ref{subadd2}) was proved under additional assumptions on the distribution of $V(0,0)$.  For a proof of part (\ref{subadd2}), we refer the reader to \cite[Theorem 2.11]{CMS}, where the analogous  result for the Parabolic Anderson Model was established. This was done through discretization, which makes the proof almost identical to the present model. The proof is based on a percolation argument. The analogue of Theorem \ref{th:subadditive} for $\zeta$ is the following: There exists a constant $\mu \in [0,\infty)$ such that $\lim_{T\to\infty} \frac 1T \zeta(T)=\mu,~Q$-almost surely, and $-\frac 1T \ln Q(\zeta(T)>(\mu+\epsilon)T)\sim 1$.
\par  
 In this paper we study the lower tail of the large deviations of $Z(T)$, namely the behavior of  $Q(Z(T)\le e^{(\lambda-\epsilon)T})$ for $\epsilon>0$. For every $\epsilon>0$ define a function  $R_{\epsilon}:\Z_+\to [-\infty,0]$ by letting 
 $$R_{\epsilon}(T)=-\ln Q(Z(T)\le e^{(\lambda-\epsilon)T}).$$
 The function $R_{\epsilon}$ will be called ``the rate''. Similarly, we let $R_{\epsilon}^{\zeta}(T)=-\ln Q(\zeta(T)\le (\mu-\epsilon)T)$. The main goal is to find the functional dependence of $R_{\epsilon}$ on the distribution of $V(0,0)$. Intuitively, the difference between the upper tail of the large deviations of Theorem \ref{th:subadditive}-(\ref{subadd2}) and the lower tail of the large deviations can be explained as follows: In order for $\zeta(T)$ to be bigger than $(\mu+\epsilon)T$, we need $H_{\gamma}(T)\ge (\mu+\epsilon)T$ for one path $\gamma$, but in order for it to be smaller than $(\mu-\epsilon)T$, we need $H_{\gamma}(T)<(\mu-\epsilon)T$ for all paths $\gamma$. Of course, the latter event is typically significantly less probable. Therefore, one may expect that for some fields, the rate will be of an order larger than $T$. Other models known to exhibit asymmetry between the upper and lower tails of the  large deviations include (non-oriented) first-passage percolation  \cite[Theorem 4.3]{kesten} \cite{CZ}, length of the longest increasing subsequence in a random permutation \cite{aldous}, and the longest increasing sequence of random samples on the unit square \cite{zeitouni}.  We now sketch a mathematical argument that can be used to prove such an asymmetry for $\zeta$. For reasons soon to become clear, it will be called ``the independence argument''. Let $c$ denote a positive constant that may vary from line to line. At the core lies the observation that given $\epsilon>0$, one can find a cube $C\subset Z^d$ centered at the origin, with side-length depending on $\epsilon$, such that the supremum of $H_{\gamma}(T)$ over all paths $\gamma$ with $\gamma(0)=0$ and $\gamma(t)\in C$ for all $t<T$ is bigger than $(\mu-\epsilon)T$ with probability bounded below by $1-e^{-cT}$. Roughly, this is proved by ``navigating'' paths towards the origin while controlling the probability using the FKG inequality. Call $C$ ``good'' if this event occurs.  Suppose we have $N$ disjoint translates of $C$ and for each one we consider the shifted version of $\zeta(T)$, that is the supremum taken over all paths starting from the shifted  center. Due to independence, the probability that some cube is good is bounded below by $1-e^{-cNT}$. Let $M:\Z_+\to\Z_+$ be a function satisfying $M(T)\underset{T\to\infty} {\nearrow} \infty$ and $M(T)\le T/2$. At time $M(T)$ there is an order of $M(T)^d$ points $x\in \Z^d$ for which $\gamma(M(T))=x$ for some path $\gamma$ with $\gamma(0)=0$.  A certain proportion of these points, depending on the size of $C$, can be declared as centers of disjoint translates of $C$. Suppose for the moment that $V(0,0)$ is bounded from below by, say,  $-1$. In this case, $H_{\gamma}(M(T))\ge -M(T)$ for all paths $\gamma$ with $\gamma(0)=0$. As a result,   the probability that $\zeta(T)\ge (\mu-\epsilon-M(T)/T)T$ is bounded below by $1-e^{-c M(T)^d T}$. Choosing $M(T)=\f{\epsilon T}$, we immediately see that $R^{\zeta}_{2\epsilon}(T)\ge c T^{1+d}$.  This type of argument was used in \cite{CZ} to prove the corresponding result for (non-oriented) first-passage percolation in a nonnegative field.  An upper bound on the rate is simpler. Continuing with the same example, suppose that $Q(V(0,0)=-1)>0$ and let $M(T)$ be as above. Consider now the event that $V(t,x)=-1$ for all $t<M(T)$ and all $|x|\le t$. This event has probability bounded below by $e^{-cT^{1+d}}$. Therefore, it easily follows from Theorem \ref{th:subadditive}-(\ref{subadd2}) and the FKG inequality that $\zeta(T) \le e^{-\f{\epsilon T}} e^{(\mu+\epsilon/2)(T-M(T))}\le e^{(\mu-\epsilon/2)T}$ with probability bounded below by $e^{-c T^{1+d}}(1-e^{-c(T-M(T))})^{(1+M(T))^d}\sim e^{-\frac c2 T^{1+d}}$. Thus,  $R^{\zeta}_{\epsilon/2}\le  cT^{1+d}$.
\par 
  When $V$ is unbounded from below, the contribution of the paths near the beginning may drastically affect the rate.  This situation was first treated in \cite{CGM}, for a model of oriented last-passage bond percolation, as well as for a (non-oriented) first-passage percolation model. We refer to the the function $x\to -\ln Q(-V(0,0)>x)$ as the ``negative tail''. The main results of the above paper are a perturbation result giving a necessary and sufficient condition on the negative tail  to guarantee that $R^{\zeta}_{\epsilon}(T)\sim T^{1+d}$ (Corollary \ref{cor:fast} below) and an estimate for the rate in the Gaussian case (Corollary \ref{cor:gaussian} below) in one dimension. The lower bound on the rate in \cite{CGM} was obtained through a certain construction of paths near the beginning. This construction depends on the realization of the field and therefore leads to  an elaborate process of choosing realizations, controlling their probabilities and matching corresponding paths. Due to its nature, this method requires an a-priori estimate of the rate and is hard to apply for more general fields.
\par In this paper we develop a different approach based on a universal construction, which reduces the estimation of the rate to an optimization problem and allows us to obtain estimates for the rate in terms of the negative tail for a large class of fields. As our results show, one can summarize the dependence of the rate on the negative tail as follows:  
\begin{itemize}
\item When the negative tail is ``sufficiently large", then the rate is comparable to it (Theorem \ref{th:slow}-(\ref{slow1})); 
\item When the negative tail is ``sufficiently small", then $R_{\epsilon}(T)\sim T^{1+d}$ (Corollary \ref{cor:fast});
\item Transition. The rate is $o(\min(T G(T),T^{1+d}))$ (Corollary \ref{cor:transition}, for example).   
\end{itemize} 
We begin with the following simple result:
\begin{prop}
\label{pr:expo} 
 Assume that (AS0)-(AS2) hold and, in addition, $-\ln Q( -V(0,0)>x)\sim x$.  Then, $R_{\epsilon}(T)\sim T$. 
\end{prop}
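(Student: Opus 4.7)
The plan is to use the heavy side of the negative-tail hypothesis, $Q(V(0,0) \le -x) \ge e^{-c_1 x}$, by forcing a single site, $V(0,0)$, to be extremely negative. For $K = K(\epsilon) > 0$ to be chosen, the event $\mathcal E := \{V(0,0) \le -KT\}$ has $Q(\mathcal E) \ge e^{-c_1 KT}$ for $T$ large. Because every path starts at the origin, on $\mathcal E$
\[
Z(T) \le e^{-KT}\, E_0\!\left[\exp\!\left(\sum_{t=1}^{T-1} V(t,\gamma(t))\right)\right].
\]
Conditioning on $\gamma(1)$ and using translation invariance of $V$, the expectation on the right is a convex average of $2d$ random variables each having the law of $Z(T-1)$. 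Theorem \ref{th:subadditive}(\ref{subadd2}) combined with a union bound then bounds it by $e^{(\lambda+\epsilon/2)(T-1)}$ outside an event of probability $2d\,e^{-c_2 T}$. Since $V(0,0)$ is independent of $\{V(t,x):t\ge 1\}$, both bounds hold jointly with probability $\ge \tfrac12 e^{-c_1 KT}$; choosing $K \ge 3\epsilon/2$ yields $Z(T) \le e^{(\lambda-\epsilon)T}$ on the joint event, proving $R_\epsilon(T) \le CT$.

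\textbf{Lower bound on the rate $R_\epsilon(T) \ge cT$.} Here we use the light side $Q(V(0,0) \le -x) \le e^{-c_1' x}$. Set $M := CT$ for a large constant $C$, let $\tilde V_M := V \vee (-M)$, and write $\tilde Z_M(T)$ for the corresponding partition function. On $\mathcal A := \{V(t,x) \ge -M \text{ for all } (t,x) \text{ with } |x| \le t < T\}$ one has $\tilde V_M = V$ throughout the cone, and hence $Z(T) = \tilde Z_M(T)$; a union bound over the $O(T^{1+d})$ sites in the cone gives $Q(\mathcal A^c) \le T^{1+d}e^{-c_1' M} \le e^{-cT}$. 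Since $\tilde V_M \ge V$ pointwise, the growth rate satisfies $\tilde\lambda_M \ge \lambda$, and applying the ``independence argument'' of the introduction to the bounded-below IID field $\tilde V_M$---but with the intermediate time parameter taken to be $M(T) = O(1)$, the largest value compatible with the floor $-CT$ on $\tilde V_M$---yields $Q(\tilde Z_M(T) \le e^{(\tilde\lambda_M - \epsilon)T}) \le e^{-c_3 T}$. Combining,
\[
Q(Z(T) \le e^{(\lambda-\epsilon)T}) \le Q(\tilde Z_M(T) \le e^{(\lambda - \epsilon)T}) + Q(\mathcal A^c) \le 2e^{-cT}.
\]

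\textbf{Main obstacle.} The upper bound is short once the single-site construction is identified. The delicate step is the lower bound, where one must calibrate the truncation level $M \asymp T$ against the intermediate time parameter $M(T) \asymp 1$ in the independence argument so that the union-bound estimate on $\mathcal A^c$ and the FKG estimate on $\tilde Z_M$ yield matching exponents of order $T$; this reduces the independence-argument exponent from $T^{1+d}$ in the bounded-field case to $T$ in the present setting, which is exactly what the proposition requires.
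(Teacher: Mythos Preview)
Your upper bound is correct and coincides with the paper's argument (Lemma~\ref{lem:ubrate}(ii)): force $V(0,0)\le -O(\epsilon)T$, which costs $e^{-O(T)}$ by the tail hypothesis, and control the remaining $T-1$ steps by the exponential upper-tail large deviations of Theorem~\ref{th:subadditive}(\ref{subadd2}).

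For the lower bound you take a substantial detour. The paper's proof is one line: under (AS0)--(AS2) alone, Lemma~\ref{lem:channels} (the ``good cube'' estimate already quoted in the introduction) gives
\[
Q\bigl(E_0[\,e^{H(T)}\ch_{\{\max_s|\gamma(s)|\le W\}}]\ge e^{(\lambda-\epsilon)T}\bigr)\ge 1-e^{-cT},
\]
and since $Z(T)$ dominates the confined partition function, $R_\epsilon(T)\ge cT$ follows immediately---no truncation, no branching, and no use of the negative-tail hypothesis. Your route can be made to work, but as written it has a calibration slip: with a \emph{large} truncation constant $C$ and floor $-CT$, the deterministic loss over the first $M(T)$ steps is $M(T)\cdot CT$, which exceeds $\epsilon T$ for any $M(T)\ge 1$; so ``$M(T)=O(1)$'' is really forced to $M(T)=0$, at which point the ``independence argument'' collapses to the good-cube estimate and the truncation becomes superfluous. (Invoking the independence argument for the $T$-dependent field $\tilde V_M$ also hides a uniformity issue in the constants of Lemma~\ref{lem:channels}; this is rescued by the pointwise inequality $\tilde V_M\ge V$, but that observation again shows one may as well work with $V$ directly.) The calibration you describe in your ``Main obstacle'' paragraph is a real phenomenon for lighter negative tails, but in the exponential-tail case the answer is that no intermediate time and no truncation are needed at all.
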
 
 For our main results, further assumptions on the negative tail are required. Unless otherwise stated, in addition to (AS0)-(AS2), below we will always assume the following:
\begin{enumerate}
\item[(AS3)] There exists a constant $\ol{x}>0$ and a continuous, strictly increasing function $G:\R_+\to \R_+$ such that 
 \begin{align*}
   &\lim_{t\to\infty} G(x)=\infty;\\
   &Q(-V(0,0)> x) = e^{-x G(x)},\text{ for all }x\ge \ol{x}.
 \end{align*}
\end{enumerate}
 We note that there is no loss of generality assuming that $G(0)=0$. Therefore it follows that $G$ has a continuous, strictly increasing inverse, $G^{inv}:[0,\infty)\to \R_+$ with the properties: 
 $$G^{inv}(0)=0,\text{ and }\lim_{x\to\infty} G^{inv}(x)=\infty.$$ 
 We need some additional notation. Let 
 \begin{align*}
 f(x) &= \frac{G(x)}{x^d},~x>0;\text{ and let }\\
 F(z) &= z^{1/d} \int_{G^{inv}(1)}^{G^{inv}(z)} G^{-1/d}(x)dx,~z\ge G^{inv}(1).
\end{align*}  
 We have chosen to work with monotone $f$. We split the results according to whether $f$ is non-increasing or non-decreasing. In all results below, $\epsilon$ is assumed to be any positive constant. We begin with the case that $f$ is non-increasing. In terms of the negative tail, this corresponds to the case where it is not larger than $O(T^{1+d})$. 
 \begin{theorem}
 \label{th:slow}
 Suppose that $f$ is non-increasing.
 Let 
 $$ \gamma = \limsup_{y\to\infty}  \frac{F(G(y))}{y}= \limsup_{y\to\infty} f^{1/d}(y)\int_{G^{inv}(1)}^y G^{-1/d}(x)dx.$$
  \begin{enumerate}
 \item\label{slow1}  If $\gamma<\infty$ then $R_{\epsilon}(T)\sim T G(T)$. 
 \item\label{slow2} If $\gamma=\infty$, then there exists a constant $C>0$, depending only on $\epsilon$ and the distribution of $V(0,0)$ such that  
 for every $\delta >0$
 $$\limsup_{T\to\infty} \frac{R_{\epsilon}(T)}{T G(\delta T)}\le C.$$
\end{enumerate} 
\end{theorem}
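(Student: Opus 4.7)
The plan is to establish matching upper and lower bounds, with the upper bound construction handling both items simultaneously and the lower bound in part (\ref{slow1}) being the delicate part that actually requires the hypothesis $\gamma<\infty$.

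\textbf{Upper bound.} I would exhibit a small-probability event forcing $Z(T) \le e^{(\lambda-\epsilon)T}$. For parameters $a>0$ and $M\in\Z_+$, let
\[
E_{a,M} = \{V(t,x) \le -a :\, 0 \le t < M,\ |x|\le t\}.
\]
By (AS3), $Q(E_{a,M}) \ge \exp(-C_1 M^{d+1} a G(a))$, since the cone contains $O(M^{d+1})$ sites. On $E_{a,M}$ every path with $\gamma(0)=0$ satisfies $H_\gamma(M)\le -aM$, so $Z(T) \le e^{-aM}\widetilde Z(T-M)$, where $\widetilde Z$ is governed by the field at times $\ge M$---independent of $E_{a,M}$ and distributed as $Z(T-M)$. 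By Theorem~\ref{th:subadditive}(\ref{subadd2}), $\widetilde Z(T-M) \le e^{(\lambda+\epsilon/4)(T-M)}$ with probability $\ge 1 - e^{-c(T-M)}$. Arranging $aM \asymp \epsilon T$ then yields $Z(T) \le e^{(\lambda-\epsilon)T}$ on an event of probability at least $\tfrac12 \exp(-C_1 M^{d+1} a G(a))$. Writing $M^{d+1}aG(a) = (aM)^d \, M \, f(a)$ and using that $f$ is non-increasing, this cost is minimized by taking $a$ as large (and $M$ as small) as the constraint $aM\asymp \epsilon T$ allows; taking $a = \delta T$ with $M$ proportional to $\epsilon/\delta$ gives $R_\epsilon(T) \le C_{\epsilon,\delta}\, TG(\delta T)$, establishing part (\ref{slow2}) and, via $\delta=1$, the ``$\le$'' half of part (\ref{slow1}).

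\textbf{Lower bound in part (\ref{slow1}).} Here I need $R_\epsilon(T) \ge cTG(T)$ when $\gamma<\infty$. I would combine a field truncation with a quantitative version of the FKG-based independence argument sketched in the introduction. Let $\widetilde V = V \vee (-a)$ with truncation level $a=a(T)$ chosen below, and let $B$ be the event that $V\ge-a$ throughout the space-time cone relevant for $Z(T)$; then $Z=\widetilde Z$ on $B$, so
\[
Q\bigl(Z(T) \le e^{(\lambda-\epsilon)T}\bigr) \le Q\bigl(\widetilde Z(T) \le e^{(\lambda-\epsilon)T}\bigr) + Q(B^c),
\]
with $Q(B^c) \le CT^{d+1} e^{-aG(a)}$. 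Since $\widetilde V \ge -a$ has free energy $\widetilde\lambda(a)\to\lambda$ as $a\to\infty$, the bounded-field navigation argument applied to $\widetilde V$ at an intermediate time $M \asymp \epsilon T/a$, using $N\asymp(M/L)^d$ disjoint translates of a cube of side $L=L(\epsilon)$, gives $Q(\widetilde Z(T) \le e^{(\lambda-\epsilon)T}) \le \exp(-c\,T^{d+1}/a^d)$. Optimizing $a$ so that both terms are $\le e^{-cTG(T)}$, and interpreting the resulting balance analytically through the definitions of $f$, $F$, and $\gamma$, yields $R_\epsilon(T) \ge cTG(T)$.

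\textbf{Main obstacle.} The chief technical difficulty lies in the lower bound. A naive single-scale balance of $aG(a)$ against $T^{d+1}/a^d$ is not sharp enough to reach $TG(T)$ at the threshold; the argument must exploit the multi-scale structure encoded by the integral defining $F$, with $\gamma<\infty$ identifying the precise regime in which this integral is of order $y\,G^{-1/d}(y)$ and therefore produces the $TG(T)$ rate. Ensuring that the FKG navigation step is suitably uniform in the truncation level $a$ (so that letting $a\to\infty$ still leaves the cube-independence intact) is the other delicate point. The upper bound, by contrast, is routine once the cone construction is set up.
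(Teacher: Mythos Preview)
Your constant-threshold cone construction for the upper bound reproduces the paper's argument for part~(\ref{slow1}) (it is essentially Lemma~\ref{lem:ubrate}(iii) with $M=1$ and $a=2\epsilon T$), but it does \emph{not} establish part~(\ref{slow2}) as stated. With $a=\delta T$ and $M\asymp\epsilon/\delta$, the cost $M^{d+1}aG(a)\asymp\epsilon^{d+1}\delta^{-d}\,TG(\delta T)$ carries a prefactor $\delta^{-d}$, whereas the theorem demands a constant $C$ independent of $\delta$; this uniformity is precisely what is exploited in Corollary~\ref{cor:slowuseful}(ii) to conclude $R_\epsilon(T)=o(TG(T))$, and your bound would give nothing there. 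The paper achieves the $\delta$-free constant by using a \emph{time-dependent} threshold $-V(t,x)\ge G^{inv}(\eta(T)/(1+t)^d)$: since there are $\le(1+t)^d$ sites at level $t$, the log-cost per level becomes $\eta(T)\,G^{inv}(\eta(T)/(1+t)^d)$, so both the total cost and the path-gain are governed by the same sum $I_\eta^M(T)$. With $\eta(T)=G(\delta T)$ one then chooses $M$ so that $I_\eta^M(T)\in[2\epsilon T,3\epsilon T)$, and Proposition~\ref{pr:lb} gives the uniform bound $(\ln q^{-1}+3\epsilon)\,TG(\delta T)$.

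The lower bound in part~(\ref{slow1}) has a more serious gap. You correctly observe that balancing $aG(a)$ against $T^{d+1}/a^d$ falls short---for $G(x)=x^\alpha$ with $0<\alpha<d$ it yields only $T^{(1+\alpha)(d+1)/(d+1+\alpha)}<T^{1+\alpha}$---but you provide neither the multi-scale replacement nor a resolution of the uniformity-in-$a$ problem for the navigation constants. The paper's route is structurally different and avoids truncation entirely. It builds a deterministic tree $\tilde\Gamma$ of paths in which each site $(t,x)$ is visited by at most a $C/(1+t)^d$ fraction of the paths (Lemma~\ref{lem:ntx}), then applies a single Chernoff bound to $\sum_{\gamma\in\tilde\Gamma}(-H_\gamma(M))$. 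The tree's uniformity means that the moment generating function of $-V(0,0)$ enters at parameter $\asymp\eta/(1+t)^d$, and Lemma~\ref{lem:expmom2} converts the resulting product into $\exp(\eta(T)I_\eta^M(T))$. With $\eta(T)=G(T)$, the hypothesis $\gamma<\infty$ is exactly the statement that $F(\eta(T))=O(T)$, hence $I_\eta^M(T)=O(T)$, making the Chernoff exponent of order $T\eta(T)=TG(T)$. Thus $F$ and $\gamma$ arise not from any truncation balance but as the direct output of a Markov inequality against the untruncated field on a tree whose branching matches the cone geometry.
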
 
 Under stronger requirements on $f$, we obtain a necessary and sufficient condition: 
\begin{corollary}
\label{cor:slowuseful}
 Suppose that $f$ is convex, $\lim_{x\to\infty} f(x)=0$ and that the limit
  $$\rho =\lim_{x\to\infty}-\frac{\frac{d}{dx}\ln f(x)}{\frac{d}{dx} \ln x}=\lim_{x\to\infty}-\frac{x f'(x)}{f(x)}$$
 exits.  
\begin{enumerate}
\item If $\rho>0$, then $R_{\epsilon}(T)\sim T G(T)$; 
\item If $\rho=0$, then $R_{\epsilon}(T)=o(T G(T))$. 
\end{enumerate}
\end{corollary}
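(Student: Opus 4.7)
The plan is to read off the index of regular variation of $f$ from $\rho$ and then to compute (or bound) the constant $\gamma$ appearing in Theorem~\ref{th:slow}. The hypothesis $\rho=\lim_{x\to\infty}-xf'(x)/f(x)$, together with the convexity of $f$ (which guarantees that $f'$ exists a.e.\ and is monotone), is the differentiable form of the Karamata characterization of regular variation: $f(x)=L(x)x^{-\rho}$ with $L$ slowly varying. Since $f\to 0$ and $G=x^d f\to\infty$ we have $0\le\rho\le d$, and in particular $G$ is regularly varying of index $d-\rho$ with $G^{-1/d}(x)=L(x)^{-1/d}\,x^{-(d-\rho)/d}$.

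For part (i), assume $\rho>0$. By Karamata's theorem (integration of regularly varying functions, with exponent $-(d-\rho)/d>-1$; the boundary case $\rho=d$ is handled by the slowly varying variant),
$$\int_{G^{\mathrm{inv}}(1)}^{y}G^{-1/d}(x)\,dx\ \sim\ \frac{d}{\rho}\,L(y)^{-1/d}\,y^{\rho/d}.$$
Multiplying by $f^{1/d}(y)=L(y)^{1/d}\,y^{-\rho/d}$ yields $\gamma=d/\rho<\infty$, so Theorem~\ref{th:slow}-(\ref{slow1}) gives $R_\epsilon(T)\sim TG(T)$.

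For part (ii), $\rho=0$, so $f$ is slowly varying with $f\to 0$, and $M:=f^{-1/d}$ is slowly varying with $M\to\infty$; moreover $xM'(x)/M(x)=-(x/d)\,f'(x)/f(x)\to 0$. First I would show $\gamma=\infty$. Since $G^{-1/d}(x)=x^{-1}M(x)$,
$$\gamma=\limsup_{y\to\infty}\frac{1}{M(y)}\int_{a}^{y}\frac{M(x)}{x}\,dx.$$
Both the integral and $M(y)$ tend to $\infty$, so a monotone L'H\^opital argument reduces the limit to
$$\lim_{y\to\infty}\frac{M(y)/y}{M'(y)}=\lim_{y\to\infty}\frac{M(y)}{y\,M'(y)}=\infty,$$
giving $\gamma=\infty$. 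Then Theorem~\ref{th:slow}-(\ref{slow2}) supplies a constant $C$ with $\limsup_T R_\epsilon(T)/(T\,G(\delta T))\le C$ for every $\delta>0$. By slow variation of $f$, $G(\delta T)/G(T)=\delta^d\,f(\delta T)/f(T)\to\delta^d$, so $\limsup_T R_\epsilon(T)/(TG(T))\le C\delta^d$. Letting $\delta\downarrow 0$ yields $R_\epsilon(T)=o(TG(T))$.

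The main obstacle is establishing $\gamma=\infty$ in part (ii): slow variation of $f$ alone is not enough, but the strong pointwise convergence $xM'/M\to 0$—the differentiable form of $\rho=0$, available because $f$ is convex—is exactly what powers the L'H\^opital estimate. A non-smooth $f$ is dealt with by the standard smooth majorant/minorant construction for convex functions, which is essentially bookkeeping.
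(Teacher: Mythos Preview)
Your proof is correct and takes essentially the same approach as the paper: both show $\gamma=d/\rho$ and then invoke the two parts of Theorem~\ref{th:slow}. Where you appeal to Karamata's representation and integration theorem, the paper argues more elementarily---a direct L'H\^opital computation on $u=f^{-1/d}$ and $v=\int G^{-1/d}$ gives $v'/u'=-d\,f/(yf')\to d/\rho$, and for part~(ii) the bound $\limsup G(\delta T)/G(T)\le\delta^d$ is read off directly from the convexity inequality $f(y)\ge f(\delta y)+(1-\delta)y f'(\delta y)$ rather than from slow variation of $f$.
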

The proof of the corollary is given at the end of Section \ref{sec:proofs}. 
As a concrete example we have 
\begin{example}~
\begin{enumerate}
\item Suppose that $G(x)= x^{\alpha}$, for $\alpha\in [0,d)$. 
 Then $R_{\epsilon}(T)\sim T^{1+\alpha}$.  
\item Suppose that $G(x)= x^d e^{(-\ln x)^{\beta}}$ for $\beta \in [0,1)$. 
 Then $R_{\epsilon}(T)=o(T G(T))$. 
\end{enumerate}
\end{example}
 We now move the the case where $f$ is non-decreasing. That is, the negative tail is not smaller than  $O(T^{1+d})$. 
\begin{theorem}
\label{th:fast}
 Suppose that $f$ is non-decreasing. Let $\eta:\Z_+\to \R_+$ be such that  
$F(\eta(T))\sim T$. Then $R_{\epsilon}(T) \sim T \eta(T)$. 
\end{theorem}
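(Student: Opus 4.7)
The plan is to deduce both bounds in $R_\epsilon(T)\sim T\eta(T)$ from a single ``universal'' construction that reduces the rate to a convex optimization. Forcing $V(t,x)\le -v_t$ simultaneously for all $|x|\le t$ and all $1\le t\le M$ (a) makes every nearest-neighbor path $\gamma$ from $0$ satisfy $H_\gamma(M)\le -\sum_{t=1}^M v_t$ (since $|\gamma(t)|\le t$), and (b) by independence of $V$ has probability $\exp\bigl(-\sum_{t=1}^M(2t+1)^d v_t G(v_t)\bigr)$. Thus the rate is essentially the minimum cost subject to $\sum_t v_t\gtrsim T$. The Euler--Lagrange equation $t^d(G(v_t)+v_t G'(v_t))=\lambda$ suggests $v_t\asymp G^{inv}(\lambda/t^d)$; matching $\sum v_t\sim T$ via the definition of $F$ then yields the scaling $\lambda\asymp\eta(T)$, cutoff $M\asymp\eta(T)^{1/d}$, and optimal cost $\sim T\eta(T)$.

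\textbf{Upper bound.} Concretely, set $\eta:=\eta(T)$, $M:=\lfloor \eta^{1/d}\rfloor$, $v_t:=G^{inv}(\eta/t^d)$, and consider
\[ \mathcal{E}_T \;:=\; \bigcap_{t=1}^{M}\bigcap_{|x|\le t}\{V(t,x)\le -v_t\}. \]
A change of variables $x=G^{inv}(\eta/t^d)$ followed by integration by parts gives
\[ \sum_{t=1}^M v_t \;=\; F(\eta)+\eta^{1/d}G^{inv}(1)-G^{inv}(\eta)\;\gtrsim\;T, \]
since $f$ non-decreasing forces $G^{inv}(\eta)=O(\eta^{1/d})$, and $F(\eta)\sim T$ dominates the correction terms. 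Hence on $\mathcal{E}_T$, $H_\gamma(M)\le -cT$ for every walk from $0$, and applying Theorem~\ref{th:subadditive}(\ref{subadd2}) to the shifted partition function on the tail interval $[M,T]$ (union-bounded over the polynomially many walk positions at time $M$) yields $Z(T)\le e^{(\lambda-\epsilon)T}$ on $\mathcal{E}_T\cap\mathcal{E}'_T$ with $Q(\mathcal{E}'_T)\to 1$. By independence $Q(\mathcal{E}_T)=\exp\bigl(-\sum_t(2t+1)^d v_t G(v_t)\bigr)$; substituting $G(v_t)=\eta/t^d$ the exponent is $\asymp\eta\sum_t v_t\asymp T\eta(T)$, so $R_\epsilon(T)\le CT\eta(T)$.

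\textbf{Lower bound.} To show $Q(Z(T)\le e^{(\lambda-\epsilon)T})\le e^{-cT\eta(T)}$ I would extract from the $Z$-smallness event a ``witness profile'' of negative field values matching the scale structure of the optimizer. Decompose $[1,M]$ dyadically into $I_k=[2^k,2^{k+1})$, set $v^\star_k:=G^{inv}(\eta/2^{kd})$, and let $N_k$ count the sites $(t,x)$ with $t\in I_k$, $|x|\le t$ and $V(t,x)\le -v^\star_k$. A path-counting / FKG argument mirroring the universal construction should yield the combinatorial inequality $\sum_k N_k v^\star_k\gtrsim T$ whenever $Z(T)\le e^{(\lambda-\epsilon)T}$: otherwise a positive-probability family of paths would bypass the negative cells and contradict the bound on $Z$. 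Scale-by-scale Chernoff estimates, combined with a union bound over the polynomially many admissible profiles $(N_k)$, then bound the probability by $\exp\bigl(-c\sum_k N_k v^\star_k G(v^\star_k)\bigr)\le e^{-cT\eta(T)}$, the dominant exponent being again $\asymp T\eta(T)$ by the same optimization.

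\textbf{Main obstacle.} The upper bound is a forward optimization that proceeds routinely once the profile $v_t=G^{inv}(\eta/t^d)$ is identified as the Euler--Lagrange optimizer and the identity for $\sum v_t$ is verified. The substance of the argument is in the lower bound: cleanly extracting the witness count-profile $(N_k)$ from the path-averaged $Z$-smallness event, and verifying the combinatorial inequality $\sum_k N_k v^\star_k\gtrsim T$ uniformly across scales, is the technical heart and the reason the construction must be truly ``universal'' rather than realization-dependent as in \cite{CGM}.
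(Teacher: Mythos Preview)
Your upper bound is essentially the paper's own argument (Proposition~\ref{pr:lb}): forcing $V(t,x)\le -G^{inv}(\eta/(1+t)^d)$ on the cone, computing the cost as $\eta(T)\,I^M_\eta(T)$, and identifying $I^M_\eta(T)\sim F(\eta(T))\sim T$ via the change of variables that you and Lemma~\ref{lem:lll} both carry out. So this half is fine.

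The lower bound, however, has a real gap. Your scheme is to show that on $\{Z(T)\le e^{(\lambda-\epsilon)T}\}$ one must have $\sum_k N_k v_k^\star\gtrsim T$, where $N_k$ counts sites at scale $k$ with $V\le -v_k^\star$. But this deterministic implication is false: take a realization where at every site $(t,x)$ with $t\in I_k$, $|x|\le t$ one has $V(t,x)\in(-v_k^\star,-v_k^\star/2)$. Then $N_k=0$ for every $k$, yet every path satisfies $H_\gamma(M)\le -\tfrac12\sum_t v_{k(t)}^\star\sim -T/2$, so $Z$ is small. Threshold counts like $N_k$ simply cannot capture the Hamiltonian, and no amount of path-counting or FKG will manufacture the inequality you state, because the obstruction is pointwise, not combinatorial. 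Consequently the Chernoff step that follows is applied to the wrong event.

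The paper's lower bound avoids this entirely. Instead of extracting a profile from $\{Z\ \text{small}\}$, it builds a fixed tree $\tilde\Gamma$ of $|S_{M}|\sim M^d$ paths with the overlap property $n_{M}(t,x)/|S_{M}|\le C/(1+t)^d$ (Lemma~\ref{lem:ntx}), and then applies Chernoff \emph{directly} to the random variable $-\sum_{\gamma\in A}H_\gamma(M)=-\sum_{t,x}n_A(t,x)V(t,x)$ over subsets $A\subset\tilde\Gamma$ (Lemma~\ref{lem:markov}). The uniform overlap bound is exactly what lets one replace the moment generating function $Q\bigl(e^{-\eta' n_A(t,x)V/|S_M|}\bigr)$ by $Q\bigl(e^{-\eta'(K/4)(1+t)^{-d}V}\bigr)$ and then invoke Lemma~\ref{lem:expmom2}; summing the resulting exponents reproduces $I^M_{\tilde\eta}(T)$ and hence $F(\eta(T))$. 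The event $E=\{\text{most }\gamma\in\tilde\Gamma\text{ are open}\}$ is then combined with the independence argument on $[M,T]$ (Lemma~\ref{lem:glueing}) to get $\{Z\ \text{small}\}\subset E^c\cup F^c$. The crucial point is that the Chernoff bound is applied to sums of the \emph{actual field values} along paths, not to indicator counts of threshold crossings---this is what your approach is missing, and it is precisely the ``universal construction'' whose role you correctly identify in your last paragraph but do not actually use.
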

The theorem has two immediate corollaries:  
\begin{corollary}
\label{cor:fast}
 Suppose that $f$ is non-decreasing. Then 
 $$R_{\epsilon}(T) \sim  T^{1+d}\text{ if and only if }\int^{\infty} G^{-1/d}(x)dx<\infty.$$
\end{corollary}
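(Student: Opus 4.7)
The plan is to deduce the corollary directly from Theorem~\ref{th:fast}. That theorem identifies $R_\epsilon(T)$ up to $\sim$ equivalence as $T\eta(T)$, where $\eta$ is any function satisfying $F(\eta(T))\sim T$. Since $F$ is continuous and strictly increasing on $[G^{inv}(1),\infty)$ (it is the product of $z^{1/d}$ with an integral over the increasing range $[G^{inv}(1),G^{inv}(z)]$ of a strictly positive continuous function), such an $\eta$ is determined uniquely up to $\sim$. Consequently, $R_\epsilon(T)\sim T^{1+d}$ is equivalent to the statement that one may take $\eta(T)\sim T^d$, which in turn is equivalent to $F(T^d)\sim T$.

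The whole corollary therefore reduces to a single asymptotic computation. Substituting $z=T^d$ into the definition of $F$ gives
$$F(T^d) \;=\; T\int_{G^{inv}(1)}^{G^{inv}(T^d)} G^{-1/d}(x)\,dx.$$
Since $G^{inv}(T^d)\to\infty$ as $T\to\infty$, the integral increases monotonically to $\int_{G^{inv}(1)}^{\infty} G^{-1/d}(x)\,dx$. Thus $F(T^d)/T$ admits a finite positive limit exactly when $\int^\infty G^{-1/d}(x)\,dx<\infty$, and diverges to $+\infty$ otherwise.

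Combining the two observations closes the ``if'' direction immediately: convergence of the integral gives $F(T^d)\sim T$, so $\eta(T)\sim T^d$ is admissible, and Theorem~\ref{th:fast} yields $R_\epsilon(T)\sim T\cdot T^d=T^{1+d}$. For the ``only if'' direction, suppose $\int^\infty G^{-1/d}(x)\,dx=\infty$. Then for every fixed $c>0$,
$$\frac{F(cT^d)}{T} \;=\; c^{1/d}\int_{G^{inv}(1)}^{G^{inv}(cT^d)} G^{-1/d}(x)\,dx \;\longrightarrow\; \infty,$$
so monotonicity of $F$ rules out $\eta(T)\gtrsim T^d$; hence $\eta(T)=o(T^d)$ and $R_\epsilon(T)=o(T^{1+d})$, contradicting $R_\epsilon(T)\sim T^{1+d}$. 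There is no real obstacle here once Theorem~\ref{th:fast} is granted; the only subtlety is to verify that the $\sim$--equivalence interacts correctly with the monotone $F$, which is precisely what the inequality comparing $F(cT^d)/T$ with the tail integral accomplishes.
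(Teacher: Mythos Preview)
Your proof is correct and follows essentially the same route as the paper's: both deduce the corollary directly from Theorem~\ref{th:fast} by observing that when the integral converges one may take $\eta(T)=T^d$, and when it diverges any admissible $\eta$ satisfies $\eta(T)=o(T^d)$, forcing $R_\epsilon(T)=o(T^{1+d})$. Your version is simply more explicit about the monotonicity of $F$ and the computation of $F(cT^d)/T$, which the paper leaves to the reader.
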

\begin{proof}
 If the integral converges, then we may take $\eta(T)=T^d$. On the other hand, if the integral diverges, the condition $F(\eta(T))\sim T$ implies that $\eta(T)=o(T)$. Therefore $R_{\epsilon}(T)=o(T^{1+d})$.
\end{proof}
 Corollary \ref{cor:fast} was first proved in \cite{CGM}.
\begin{corollary}
 \label{cor:transition}
 Suppose that $f$ is non-decreasing and bounded. Then 
 $$R_{\epsilon}(T) \sim \frac{T^{1+d}}{\ln^d T}.$$
\end{corollary}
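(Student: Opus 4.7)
The plan is to apply Theorem~\ref{th:fast}: it suffices to exhibit $\eta:\Z_+\to\R_+$ with $F(\eta(T))\sim T$, and the theorem will then deliver $R_\epsilon(T)\sim T\eta(T)$. All the work is therefore in computing the asymptotics of $F$ explicitly and then solving $F(\eta(T))\sim T$ for $\eta$.

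First I would observe that, since $f(x)=G(x)/x^d$ is non-decreasing, positive (because $G$ is strictly increasing with $G(0)=0$), and bounded by hypothesis, the limit $c:=\lim_{x\to\infty}f(x)$ exists and lies in $(0,\infty)$. Consequently $G(x)\sim c\,x^d$, which gives
$$G^{inv}(z)\sim (z/c)^{1/d}\qquad\text{and}\qquad G^{-1/d}(x)\sim c^{-1/d}x^{-1}$$
as the argument tends to infinity, and substituting these into the definition of $F$ yields
$$F(z)=z^{1/d}\int_{G^{inv}(1)}^{G^{inv}(z)} G^{-1/d}(x)\,dx\sim z^{1/d}\cdot c^{-1/d}\ln G^{inv}(z)\sim \frac{z^{1/d}\ln z}{d\,c^{1/d}}.$$
To solve $F(\eta(T))\sim T$ it then suffices to arrange $\eta(T)^{1/d}\ln\eta(T)\sim d\,c^{1/d}\,T$ (the positive constant $c^{1/d}$ is absorbed into the $\sim$-equivalence). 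The choice $\eta(T):=(T/\ln T)^d$ does the job, since
$$\eta(T)^{1/d}\ln\eta(T)=\frac{T}{\ln T}\cdot d(\ln T-\ln\ln T)\sim d\,T,$$
and Theorem~\ref{th:fast} then yields $R_\epsilon(T)\sim T\eta(T)=T^{1+d}/\ln^d T$.

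The only step requiring any care is the chain of $\sim$-equivalences used in computing $F$. This is routine: monotonicity and convergence of $f$ to $c>0$ yield two-sided bounds $(c-\delta)x^d\le G(x)\le (c+\delta)x^d$ valid for $x$ sufficiently large and any $\delta>0$, and these propagate through $G^{inv}$, $G^{-1/d}$, and the defining integral to give matching two-sided $\sim$-bounds on $F$. I do not anticipate any genuine obstacle beyond this bookkeeping.
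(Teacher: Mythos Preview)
Your proposal is correct and follows essentially the same route as the paper: both arguments observe that $f$ non-decreasing and bounded forces $G(x)\sim x^d$, hence $G^{inv}(z)\sim z^{1/d}$ and $F(z)\sim z^{1/d}\ln z$, and then verify that $\eta(T)=T^d/\ln^d T$ satisfies $F(\eta(T))\sim T$ so that Theorem~\ref{th:fast} applies. Your version is slightly more explicit about the constant $c=\lim_{x\to\infty}f(x)$, but since the paper's $\sim$ relation is equivalence up to multiplicative constants this tracking is unnecessary (though not incorrect).
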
 
\begin{proof}
 Since $f$ is non-decreasing and bounded, $G(x) \sim x^d$. In particular, $G^{inv}(x)\sim x^{1/d}$.  Therefore, $F(z) \sim z^{1/d} \ln z$. Thus, the condition of Theorem \ref{th:fast} is satisfied with $\eta(T)=T^{d}/\ln^d T$.
\end{proof}
 Combining Corollary \ref{cor:transition} and Theorem \ref{th:slow}-(\ref{slow1}) we obtain 
 \begin{corollary}
 \label{cor:gaussian}
   Suppose that $V$ is Gaussian. Then 
 $$R_{\epsilon}(T) \sim \begin{cases}  T^{2}/\ln T & d=1;\\  T^2 & d\ge 2.
 \end{cases} $$
 \end{corollary}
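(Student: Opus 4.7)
The plan is to substitute the Gaussian asymptotics for the negative tail into the appropriate earlier result, splitting by dimension. First I would use the standard Mill's ratio estimate $Q(-V(0,0)>x)\sim(\sigma/(x\sqrt{2\pi}))\,e^{-x^{2}/(2\sigma^{2})}$ to read off
\[
x\,G(x)=\frac{x^{2}}{2\sigma^{2}}+\ln x+O(1),
\]
so that $G(x)\sim x/(2\sigma^{2})$ and $G^{inv}(x)\sim 2\sigma^{2}x$.

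For $d\geq 2$ I would apply Theorem~\ref{th:slow}-(\ref{slow1}). The function $f(x)=G(x)/x^{d}\sim(2\sigma^{2})^{-1}x^{-(d-1)}$ is eventually non-increasing and vanishes at infinity, so the monotonicity hypothesis is met. Finiteness of $\gamma$ is a direct computation: $\int^{y}G^{-1/d}(x)\,dx$ is of order $y^{(d-1)/d}$ (this is precisely where $d\geq 2$ enters, to keep the exponent $-1/d>-1$), while $f^{1/d}(y)$ is of order $y^{-(d-1)/d}$, so the product converges (to $d/(d-1)$). Theorem~\ref{th:slow}-(\ref{slow1}) then gives $R_{\epsilon}(T)\sim T\,G(T)\sim T^{2}$.

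For $d=1$ the idea is to invoke Corollary~\ref{cor:transition}. Here $f(x)=G(x)/x\to 1/(2\sigma^{2})$ is bounded, and the key calculation underlying the corollary goes through: using $G^{-1}(x)\sim 2\sigma^{2}/x$ one finds $F(z)\sim 2\sigma^{2}\,z\ln z$, so the condition $F(\eta(T))\sim T$ of Theorem~\ref{th:fast} is met by $\eta(T)=T/\ln T$, giving $R_{\epsilon}(T)\sim T\,\eta(T)=T^{2}/\ln T$.

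The main obstacle is a technical one in the $d=1$ case: for a Gaussian, $f$ is only asymptotically constant and in fact eventually non-increasing rather than non-decreasing, so Theorem~\ref{th:fast} (and hence Corollary~\ref{cor:transition}) does not strictly apply. I would resolve this by a two-sided comparison: bracket $Q(-V(0,0)>x)$ between tails of the form $e^{-\alpha x^{2}/(2\sigma^{2})}$ with $\alpha$ close to $1$, whose associated $G_{\alpha}(x)=\alpha x/(2\sigma^{2})$ has constant $f_{\alpha}$ (trivially non-decreasing), so Corollary~\ref{cor:transition} applies to each auxiliary field and yields $T^{2}/\ln T$. Monotonicity of $Z(T)$ in the field transfers these bounds to the Gaussian case, and letting $\alpha\to 1$ absorbs the constants.
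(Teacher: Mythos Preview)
Your overall strategy is the paper's: apply Theorem~\ref{th:slow}-(\ref{slow1}) when $d\ge 2$ and Corollary~\ref{cor:transition} when $d=1$. For $d\ge 2$ your computation $\gamma=d/(d-1)<\infty$ is correct and in fact more accurate than the paper's own line (which asserts $F(G(t))\sim t^{1/d}\ln t$ and hence $\gamma=0$; either way $\gamma<\infty$, which is all that is needed). You are also right to flag, for $d=1$, that the Gaussian $f(x)=G(x)/x$ tends to $1/(2\sigma^{2})$ from above and is therefore eventually \emph{non-increasing}, so the hypothesis ``$f$ non-decreasing'' of Corollary~\ref{cor:transition} (and of Theorem~\ref{th:fast}) is not literally satisfied; the paper simply ignores this.

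Where your argument has a genuine gap is the sandwich repair. ``Monotonicity of $Z(T)$ in the field'' requires a pointwise coupling $V^{(1)}\le V\le V^{(2)}$ on a common probability space, not merely a bracketing of tail probabilities; you do not construct one, and the natural candidates (e.g.\ rescaling the Gaussian) do not produce one-sided inequalities. Even granting a coupling, the comparison fields have different free energies $\lambda^{(\alpha)}\ne\lambda$, so the events defining $R_{\epsilon}^{(\alpha)}$ sit at shifted thresholds and do not sandwich $R_{\epsilon}$ without an $\epsilon$-adjustment that you have not carried out (and ``letting $\alpha\to1$'' is immaterial, since the paper's $\sim$ already absorbs constants). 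A far simpler fix is to notice that monotonicity of $f$ is used only through Lemma~\ref{lem:lll}, and there only to guarantee that $\Delta(T)$ is bounded. For Gaussian in $d=1$, $f$ is bounded away from both $0$ and $\infty$, so $|\Delta(T)|\le C_0$ holds automatically. Feeding this into the proof of Theorem~\ref{th:fast} one gets $I_{\eta_1}^{M}(T)=F(\eta_1(T))+O(\eta_1(T))=3\epsilon T+o(T)$ (since $\eta_1(T)\sim T/\ln T$), so the hypotheses of Propositions~\ref{pr:ubprop} and~\ref{pr:lb} are met and $R_{\epsilon}(T)\sim T\eta_1(T)\sim T^{2}/\ln T$ follows without auxiliary fields.
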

\begin{proof}
Since  $V$ is Gaussian,  $G(x)\sim x$. Equivalently,  $f(x)\sim x^{1-d}$. Therefore, when $d=1$ we may apply Corollary \ref{cor:transition}. When $d\ge 2$, $F(G(t))\sim t^{1/d} \ln t = o(t)$. Therefore $\gamma=0$ and it follows from Theorem \ref{th:slow}-(\ref{slow1}) that $R_{\epsilon}(T)\sim T^2$.
\end{proof}
  In \cite{CH} it was proved using concentration inequalities that when $d\ge 3$ and $V(0,0)\sim N(0,\beta^2)$,  for some sufficiently small  $|\beta|$, then $\liminf_{T\to\infty} \frac{R_{\epsilon}(T)}{T^2}>0$. In one dimension, the corollary was proved in \cite{CGM}.\par 
 We conclude this section with an explanation of our method. We begin with the lower bound on the rate. The main idea is to construct a set of paths $\tilde \Gamma$, all starting from the origin, which is combinatorially simple and at the same time rich enough to allow that for all $t\ge 0$, the mapping $\gamma\to \gamma(t)$ from $\tilde \Gamma$ to $\Z^d$ has range of the order of $t^d$. Recall that $M:\Z_+\to \Z_+$ is a function satisfying $M(T)\underset{T\to\infty}{\nearrow} \infty$ and $M(T)\le T/2$. Call a path  $\gamma\in \tilde \Gamma$ ``open'' if $H_{\gamma}(M(T))\ge -\epsilon T$. Let $E$ denote the event that a proportion of $r\in (0,1)$ of the paths in $\tilde \Gamma$ is open. Let $(t,x)\in \Z_+\times \Z^d$. When $\gamma(t)=x$ for some $\gamma\in \tilde \Gamma$ we say that $\gamma$ visits $x$ at time $t$. The basic idea of the construction is that if a large proportion of paths visit $x$ at $t$,  then when $V(t,x)$  attains a large negative value, this affects all of them ``free of charge'', probability-wise. This cannot be completely avoided, as all paths have to begin from the origin. However, we can minimize the damage by requiring that for each time $t$, all points visited at time $t$ are visited by a comparable proportion of paths. This uniformity  leads almost immediately to simple lower bound on the probability of $E$ derived directly from upper bounds on the moment generating function of $-V(0,0)$ through the Markov inequality. Denote  this lower bound by $1-e^{-J(T)}$. On $E$, we may  use the open paths as channels leading from the origin to an order of $M(T)^d$ centers of disjoint translates of $C$, allowing for the independence argument to apply. Since this involves only some of the paths starting from the origin, it follows that the probability that $\zeta(T)\ge (\lambda-2\epsilon)T$ is bounded below by $(1-e^{-cM(T)^dT})(1-e^{-J(T)})\ge 1-e^{-\frac 12\min(cM(T)^dT,J(T))}$. Thus, $R_{2\epsilon}^{\zeta}(T)\ge \frac 12  \min(cM(T)^d T, J(T))$. The rest is optimization. The upper bound is an improvement of the method presented above when $V(0,0)$ is bounded from below.\par 
\section{Proofs}
\label{sec:proofs}
 We begin with some additional notation. Let $L_t$ denote the set of points $x\in\Z^d$ for which there exist a path $\gamma$, with $\gamma(0)=0,\gamma(t)=x$. Clearly, $|L_t|\le (1+t)^d$.  For a path $\gamma$, we let  $H_{\gamma}(t_1,t_2)=\sum_{t=t_1}^{t_2-1} V(t,\gamma(t-t_1))$. Thus, $H_{\gamma}(T)=H_{\gamma}(0,T)$. We may  sometimes omit the dependence on $\gamma$. \\
 Let $\eta:\Z_+\to \R_+$ and $M:\Z_+\to \Z_+$. We define $I_{\eta}^M:\Z_+\to \R_+$ by letting
 $$I_{\eta}^{M}(T)=\sum_{t=0}^{M(T)-1} G^{inv}(\frac{\eta(T)}{(1+t)^d}).$$
  We also define the function $F_{\eta}^M:\Z_+\to \R_+$ by letting
   $$F_{\eta}^M(T)=\eta(T)^{1/d} \int_{G^{inv}(\eta(T)/M(T)^d)}^{G^{inv}(\eta(T))} G^{-1/d}(x)dx.$$
    The next result allows us to replace series with integrals. 
\begin{lemma}
\label{lem:lll}
 Suppose that $f$ is monotone. Then, there exists a constant $C_0>0$ depending only on $f$ and $d$ such that  for all $T$ satisfying $1\le M(T)^d\le \eta(T)$  
   $$F_{\eta}^M(T)+\eta^{1/d}(T)\Delta(T)\le I_{\eta}^M(T) \le F_{\eta}^M(T)+C_0 \eta^{1/d}(T),$$
 where $$\Delta(T)=f^{-1/d}(G^{inv}(\frac{\eta(T)}{M(T)^d}))-f^{-1/d}(G^{inv}(\eta(T))).$$
\end{lemma}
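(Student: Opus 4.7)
The plan is to sandwich the discrete sum $I_\eta^M(T)$ between two integrals of the strictly decreasing function $h(t)=G^{inv}(\eta/(1+t)^d)$, and then transform the bracketing integrals by a change of variable so they align with $F_\eta^M(T)$ up to explicit boundary terms. Write $M=M(T)$, $\eta=\eta(T)$; since $h$ is monotone, the standard sum--integral sandwich gives
$$\int_0^M h(t)\,dt \;\le\; I_\eta^M(T) \;\le\; h(0)+\int_0^{M-1}h(t)\,dt,$$
with $h(0)=G^{inv}(\eta)$.

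The computational core is the identity, valid for $z_0\ge 1$,
$$\int_1^{z_0}G^{inv}(\eta/z^d)\,dz = z_0\,G^{inv}(\eta/z_0^d)-G^{inv}(\eta) + \eta^{1/d}\!\int_{G^{inv}(\eta/z_0^d)}^{G^{inv}(\eta)}G^{-1/d}(u)\,du,$$
obtained via the substitution $u=G^{inv}(\eta/z^d)$ (equivalently $z=\eta^{1/d}G^{-1/d}(u)$) followed by integration by parts. After the shift $z=1+t$, applying this with $z_0=M$ turns $\int_0^{M-1}h(t)\,dt$ into $M\,G^{inv}(\eta/M^d)-G^{inv}(\eta)+F_\eta^M(T)$, while applying it with $z_0=M+1$ produces the analogous expression with $M$ replaced by $M+1$.

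For the lower bound, substituting the identity into the sandwich and using $\eta^{1/d}\Delta(T)=M\,G^{inv}(\eta/M^d)-G^{inv}(\eta)$, cancellation reduces the claim to
$$(M{+}1)\,a - M\,b + \eta^{1/d}\!\int_a^b G^{-1/d}(u)\,du \;\ge\; 0,$$
where $a=G^{inv}(\eta/(M{+}1)^d)\le b=G^{inv}(\eta/M^d)$. Since $u\mapsto \eta^{1/d}G^{-1/d}(u)$ is decreasing and equals $M$ at $u=b$, the integral is $\ge M(b-a)$, and the left-hand side collapses to $a\ge 0$.

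For the upper bound, the sandwich and identity combine to give $I_\eta^M(T) \le F_\eta^M(T) + M\,G^{inv}(\eta/M^d)$, which via $f^{-1/d}(x)=x/G^{1/d}(x)$ rewrites as $I_\eta^M(T)\le F_\eta^M(T) + \eta^{1/d}\,f^{-1/d}(G^{inv}(\eta/M^d))$. The principal obstacle is to absorb the boundary factor $f^{-1/d}(G^{inv}(\eta/M^d))$ into a constant $C_0$ depending only on $f$ and $d$: here the monotonicity of $f$ enters decisively, because $f(G^{inv}(s))^{-1}=(G^{inv}(s)/s^{1/d})^d$ inherits the monotonicity of $f$ through the increasing map $G^{inv}$, so $f^{-1/d}(G^{inv}(s))$ is monotone in $s$. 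The standing constraint $\eta/M^d\ge 1$ then restricts $s$ to $[1,\infty)$, and evaluating the monotone argument at its extreme value (namely $s=1$, where $f^{-1/d}(G^{inv}(1))=G^{inv}(1)$) supplies the desired uniform bound.
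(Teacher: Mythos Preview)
Your approach is the same as the paper's: sandwich the sum by the integral $L(T)=\int_1^{M}G^{inv}(\eta/z^d)\,dz$, then rewrite $L(T)$ via the substitution $z=\eta^{1/d}G^{-1/d}(u)$ as $F_\eta^M(T)$ plus explicit boundary terms. Your integration-by-parts identity is exactly the paper's change of variables (the paper records it as $L(T)=F_\eta^M(T)+\eta^{1/d}\Delta(T)$, equivalently $L(T)=F_\eta^M(T)+M\,G^{inv}(\eta/M^d)-G^{inv}(\eta)$), and your lower-bound argument is correct and slightly more detailed than the paper's.

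The gap is in the upper bound when $f$ is non-increasing. You correctly reduce the claim to bounding $f^{-1/d}(G^{inv}(\eta/M^d))$ by a constant $C_0$, and you note that $s\mapsto f^{-1/d}(G^{inv}(s))$ is monotone. But when $f$ is non-increasing this composite map is \emph{non-decreasing}, so evaluating at the left endpoint $s=1$ gives the \emph{minimum} on $[1,\infty)$, not the maximum; your ``extreme value'' goes the wrong way. Concretely, take $d=2$, $G(x)=x$ (so $f(x)=1/x$), and $M=1$: then $I_\eta^M=\eta$, $F_\eta^M=0$, and no constant $C_0$ satisfies $\eta\le C_0\sqrt{\eta}$ for all $\eta$. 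The paper's proof treats this case by observing $\Delta(T)\le 0$, but its passage from ``$\Delta(T)\le C_0$'' to ``$I\le F_\eta^M+C_0\eta^{1/d}$'' silently drops the extra $G^{inv}(\eta)$ coming from the sandwich and therefore shares your difficulty; in fact the later applications in the paper only invoke the upper bound with $\eta/M^d$ bounded, which is precisely what makes $f^{-1/d}(G^{inv}(\eta/M^d))$ controllable.
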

\begin{proof}
  To simplify notation we write $I(T)$ instead of $I_{\eta}^M(T)$. 
 Let $L(T)=\int_1^{M(T)} G^{inv}(\frac{\eta(T)}{y^d})dy$. 
 Clearly, 
  \begin{equation}\label{eq:LI} L(T) \le I(T) \le G^{inv}(\eta(T))+L(T).
   \end{equation}
 By changing variables to $u= y/\eta^{1/d}(T)$ we obtain 
  $$L(T) = \eta^{1/d}(T)\int_{\eta(T)^{-1/d}}^{M(T)/\eta^{1/d}(T)} G^{inv}(u^{-d}) du.$$
   We perform a second change of variables. Let $x=G^{inv}(u^{-d})$. Then  $u=G^{-1/d}(x)$ and we have 
\begin{align*}
 \nonumber
G^{inv}(u^{-d})\partial u &= x d G^{-1/d}(x) =-\frac {1}{d}  G^{-(1+1/d)}(x)G'(x)\partial x\\
 \nonumber 
 & =  -\frac{1}{d} \frac{x}{x^{1+d}f^{1+1/d}(x)}\left(d x^{d-1} f(x)+x^d f'(x)\right) \partial x\\
  \label{eq:rho(u)du}
  & = -\left(\frac{1}{xf^{1/d}(x)}+\frac{f'(x)}{df^{1+1/d}(x)}\right)\partial x=
   -G^{-1/d}(x)\partial x+ \partial f^{-1/d}(x),
\end{align*}
 where we have used $\partial$ to denote the differential, in order to avoid confusion with the dimension $d$.
  Therefore
   $$L(T)=\eta^{1/d}(T)\left(\int_{G^{inv}(\frac{\eta(T)}{M(T)^d})}^{G^{inv}(\eta(T))}G^{-1/d}(x)dx+\Delta(T)\right),$$
       Recall that $f$ is assumed to be monotone. When $f$ is non-decreasing, $f^{-1/d}$ is non-increasing and is therefore bounded. When $f$ is non-increasing, $f^{-1/d}$ is non-decreasing and therefore $\Delta(T)\le 0$. In particular, there exists a constant $C_0\ge 0$,  depending only on $f$ and $d$ such that $\Delta(T)\le C_0$. 
    The second inequality in \eqref{eq:LI} gives
     $$I(T)\le F_{\eta}^M(T)+C_0 \eta^{1/d}(T),$$
      proving the second inequality in the lemma. To conclude the proof, note that for every $z> 0$, $z^{1/d} = G^{1/d}(G^{inv}(z))=G^{inv}(z) f^{1/d}(G^{inv}(z))$, therefore $f^{-1/d}(G^{inv}(z))=z^{-1/d}G^{inv}(z)$. This gives
    $$ L(T)=F_{\eta}^M(T)+M(T) G^{inv}(\frac{\eta(T)}{M(T)^d})-G^{inv}(\eta(T)).$$
     Thus, the first inequality in \eqref{eq:LI} gives
   $$ I(T) \ge F_{\eta}^M(T)-G^{inv}(\eta(T)).$$
 \end{proof}
\subsection{Lower Bound}
\label{sec:lowerbound}
Our main result is the following: 
 \begin{prop}
\label{pr:ubprop}
 Suppose that there exists a constant $C>0$ and  $\eta:\Z_+\to\R_+$ such that the following conditions hold:  
 \begin{enumerate}
 %\item $\lim_{T\to\infty} \eta(T)=\infty$;
 \item\label{etalet} $\limsup_{T\to\infty} \frac{\eta^{1/d}(T)}{T}<C$;
 \item $\limsup_{T\to\infty} \frac{F(\eta(T))}{T}<C$. 
 \end{enumerate}
  Then 
 $$\liminf_{T\to\infty} \frac{R_{\epsilon}(T)}{T \eta(T)}>0.$$
% $$\limsup_{T\to\infty} \frac{\ln Q(Z(T)\le e^{(\lambda-4\epsilon)T})}{T \eta(T)}<0.$$
 \end{prop}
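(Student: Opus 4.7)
The plan is to carry out the universal construction described at the end of Section 1. The bound on $R_\epsilon(T)$ comes from decomposing the time interval into an initial block $[0,M(T))$ and a tail block $[M(T),T)$ and arguing separately on each. On the initial block one uses an explicit bundle of paths with a carefully balanced visit pattern so that a Chernoff estimate controls the probability of accumulating too much negative free energy; on the tail block one runs the independence argument of Section 1. I would choose $M(T)$ so that $M(T)^d$ is of order $\eta(T)$, which is admissible by hypothesis (i) because the latter forces $M(T)\le cT$ with $c<1$.

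The first step is to build a set $\tilde\Gamma$ of nearest-neighbour paths out of the origin enjoying the following uniformity: for every $t\le M(T)$ and $x\in L_t$, the number of paths in $\tilde\Gamma$ visiting $(t,x)$ equals some $m_t(x)$ comparable to $|\tilde\Gamma|/|L_t|$. This can be achieved by a balanced branching rule and is purely combinatorial. Fix a small $a\in(0,1)$, call $\gamma\in\tilde\Gamma$ \emph{open} if $H_\gamma(M(T))\ge -a\epsilon T$, and let $E$ be the event that a fixed positive fraction of $\tilde\Gamma$ is open. Next, one bounds $Q(E^c)$ via the exponential Markov inequality applied to $N_{\text{closed}}\,e^{\theta a\epsilon T}\le \sum_\gamma e^{-\theta H_\gamma(M(T))}$ with a layer-dependent parameter $\theta=\theta_t$; the uniformity converts the resulting expectation, via the regrouping $\sum_\gamma H_\gamma(M(T))=\sum_{t,x}m_t(x)V(t,x)$ and the independence of $V$ across space-time, into a product of single-site Laplace transforms. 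The negative-tail formula $Q(-V(0,0)>x)=e^{-xG(x)}$ then controls $-\ln Q(E^c)$ by a quantity of the form $I_\eta^M(T)=\sum_{t<M(T)}G^{inv}(\eta(T)/(1+t)^d)$. Lemma \ref{lem:lll}, combined with $M(T)^d\asymp\eta(T)$, identifies $I_\eta^M(T)$ with $F(\eta(T))$ up to multiplicative constants, and hypothesis (ii) bounds the latter by $CT$, so optimizing the $\theta_t$ yields $Q(E^c)\le e^{-c_1 T\eta(T)}$.

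On the event $E$, the endpoints $\{\gamma(M(T)):\gamma\ \text{open}\}$ form a set of cardinality at least of order $M(T)^d$. Select from this set a subset of the same order, spaced so as to serve as centers of disjoint translates of a cube $C=C(\epsilon)$ of fixed size. The FKG-based navigation argument of Section 1 gives, for each cube independently, probability at least $1-e^{-cT}$ that the shifted $\zeta$-type supremum over paths confined to the cube on $[M(T),T)$ reaches $(\lambda-\epsilon/3)(T-M(T))$, and these events are mutually independent. Hence the probability that no cube succeeds is at most $e^{-cM(T)^d T}\le e^{-c_2 T\eta(T)}$. Concatenating any open path with a successful sub-path in one of the cubes produces a path $\gamma$ with $H_\gamma(T)\ge -a\epsilon T+(\lambda-\epsilon/3)(T-M(T))\ge(\lambda-\epsilon)T$ for $a$ small enough, and since the open bundle supplies exponentially many concatenated paths this suffices to force $Z(T)\ge e^{(\lambda-\epsilon)T}$ after accounting for the uniform random-walk measure. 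Combining the two steps, $Q(Z(T)\le e^{(\lambda-\epsilon)T})\le Q(E^c)+e^{-c_2 T\eta(T)}\le 2e^{-c_3 T\eta(T)}$, proving the proposition.

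The technical crux is the Chernoff optimization in the first step: the layer-dependent $\theta_t$ must be tuned to the multiplicity $m_t$ and to the negative tail so that the cumulative exponential cost is exactly the sum $I_\eta^M(T)$ addressed by Lemma \ref{lem:lll}. Only this precise matching converts hypothesis (ii) on $F(\eta(T))$ into a rate of order $T\eta(T)$ rather than something smaller. The remaining ingredients---the balanced construction of $\tilde\Gamma$, the extraction of disjoint cubes, and the concatenation---are standard combinatorial routines.
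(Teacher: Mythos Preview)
Your overall architecture matches the paper's: build a balanced bundle $\tilde\Gamma$, set $M(T)^d\asymp\eta(T)$, control the initial block by a Chernoff bound that produces $I_\eta^M(T)$, feed this into Lemma~\ref{lem:lll} together with hypothesis~(ii), and conclude with the independence argument on $[M(T),T)$ via Lemma~\ref{lem:channels}. That is exactly the paper's route.

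There is, however, a real gap in the Chernoff step as you describe it. From the inequality
\[
N_{\mathrm{closed}}\,e^{\theta a\epsilon T}\;\le\;\sum_{\gamma\in\tilde\Gamma} e^{-\theta H_\gamma(M(T))}
\]
and a first-moment bound you obtain only
\[
Q(E^c)\;\le\;\frac{Q\!\left(\sum_\gamma e^{-\theta H_\gamma(M(T))}\right)}{(1-r)\,|\tilde\Gamma|\,e^{\theta a\epsilon T}}
\;=\;\frac{1}{1-r}\,\bigl(Q e^{-\theta V(0,0)}\bigr)^{M(T)}e^{-\theta a\epsilon T},
\]
because $Q e^{-\theta H_\gamma(M(T))}$ is the \emph{same} for every path (the field is IID along each trajectory). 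The structure of $\tilde\Gamma$ has disappeared, and neither the regrouping $\sum_\gamma H_\gamma=\sum_{t,x}m_t(x)V(t,x)$ nor $I_\eta^M(T)$ ever enters: that identity concerns $e^{-\theta\sum_\gamma H_\gamma}$, not $\sum_\gamma e^{-\theta H_\gamma}$. A ``layer-dependent'' $\theta_t$ does not rescue this, since a closed path satisfies $H_\gamma<-a\epsilon T$, which says nothing about any weighted sum $\sum_t\theta_t V(t,\gamma(t))$.

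The missing idea, supplied in the paper's Lemma~\ref{lem:markov}, is a union bound over subsets: write
\[
E^c\;\subset\;\bigcup_{A\subset\tilde\Gamma,\ |A|=\lfloor(1-r)|\tilde\Gamma|\rfloor}\Bigl\{\sum_{\gamma\in A}\!-H_\gamma(M(T))>\epsilon|A|T\Bigr\},
\]
and apply exponential Markov with a \emph{single} parameter $\eta'$ to each linear event. Now $\sum_{\gamma\in A}H_\gamma(M(T))=\sum_{t,x}n_A(t,x)V(t,x)$ does regroup, the bound factors as $\prod_{t,x}Q\bigl(e^{-\eta' n_A(t,x)|\tilde\Gamma|^{-1}V(0,0)}\bigr)$, and the uniformity $n_A(t,x)/|\tilde\Gamma|\le C(1+t)^{-d}$ (only the upper bound is needed, via Jensen monotonicity) together with Lemma~\ref{lem:expmom2} delivers exactly $I_{\tilde\eta}^M(T)$ with $\tilde\eta\asymp\eta'$. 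The combinatorial cost $\binom{|\tilde\Gamma|}{\lfloor(1-r)|\tilde\Gamma|\rfloor}\le 2^{|\tilde\Gamma|}$ is harmless because $|\tilde\Gamma|\asymp M(T)^d\asymp\eta(T)=o(T\eta(T))$. With this correction your outline goes through and coincides with the paper's proof.
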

 The proof of the proposition will be preceded by a sequence of lemmas.  We begin with an estimate on the moment generating function of $-V(0,0)$.  
 \begin{lemma}
 There exists a constant $\eta_0>0$ depending only on the distribution of $V(0,0)$ such that for all $\eta'>\eta_0$,
 \label{lem:expmom2}
 \begin{equation*}
 %\label{eq:expmom2}
 Q(e^{-\eta' V(0,0)})\le e^{2\eta' G^{inv}(2\eta')}\end{equation*}
\end{lemma}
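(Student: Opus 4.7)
The plan is to bound $Q(e^{\eta' W})$ with $W := -V(0,0)$ by the standard tail-integral formula and then split at the threshold $a := G^{inv}(2\eta')$, which is where the integrand $e^{\eta' w}Q(W>w) = e^{w(\eta' - G(w))}$ passes from growing to super-exponentially decaying. Since $G(w)\to\infty$ by (AS3), the moment generating function of $W$ is finite for every $\eta'>0$, and Fubini gives
$$Q(e^{\eta' W}) \le 1+\eta'\int_0^{\infty}e^{\eta' w}\,Q(W>w)\,dw,$$
where the initial ``$1$'' absorbs the contribution of $\{W\le 0\}$ together with the boundary term $Q(W>0)$.

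Next I would split the integral over $[0,\ol{x}]$, $[\ol{x}, a]$, and $[a,\infty)$. On $[0,\ol{x}]$ the crude bound $Q(W>w)\le 1$ gives a contribution of at most $e^{\eta'\ol{x}}$. On $[\ol{x}, a]$, (AS3) together with $\eta' - G(w)\le \eta'$ yields $e^{\eta' w}Q(W>w) = e^{w(\eta'-G(w))} \le e^{\eta' w} \le e^{\eta' a}$, so the contribution is at most $\eta' a\,e^{\eta' a} = \eta' G^{inv}(2\eta')\,e^{\eta' G^{inv}(2\eta')}$. On $[a,\infty)$, monotonicity of $G$ gives $G(w)\ge G(a)=2\eta'$, so the integrand is bounded by $e^{-\eta' w}$ and the full contribution integrates to at most $1$.

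Collecting the pieces,
$$Q(e^{\eta' W}) \le 2 + e^{\eta'\ol{x}} + \eta' G^{inv}(2\eta')\,e^{\eta' G^{inv}(2\eta')}.$$
Writing $y := \eta' G^{inv}(2\eta')$ and using $G^{inv}(2\eta')\to\infty$ as $\eta'\to\infty$, for $\eta'$ large enough to ensure $\ol{x} \le G^{inv}(2\eta')$ and $y$ past a fixed universal threshold, the right-hand side is at most $2+e^y+y e^y\le e^{2y} = e^{2\eta' G^{inv}(2\eta')}$. The resulting $\eta_0$ depends only on $\ol{x}$ and $G$, hence only on the distribution of $V(0,0)$. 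No substantive obstacle arises --- the estimate is essentially a quantitative form of the saddle-point heuristic that $\int e^{w(\eta'-G(w))}\,dw$ concentrates near $w\approx G^{inv}(\eta')$ with value roughly $e^{\eta' G^{inv}(\eta')}$, and the factor $2$ inside $G^{inv}$ together with the factor $2$ in the outer exponent provide the room needed to absorb both the polynomial prefactor $\eta' G^{inv}(2\eta')$ and the boundary term $e^{\eta'\ol{x}}$.
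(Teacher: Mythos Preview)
Your proof is correct and follows essentially the same approach as the paper: both split at the threshold $\rho=G^{inv}(2\eta')$ and bound the tail integral $\eta'\int_\rho^\infty e^{\eta' w}Q(-V(0,0)>w)\,dw$ by $e^{-\eta'\rho}$ using $G(w)\ge 2\eta'$ there. The only difference is cosmetic: the paper handles the region $\{-V(0,0)\le\rho\}$ by the trivial pointwise bound $e^{-\eta' V(0,0)}\le e^{\eta'\rho}$, arriving directly at $e^{\eta' G^{inv}(2\eta')}+1+e^{-\eta' G^{inv}(2\eta')}$ without the polynomial prefactor $\eta' G^{inv}(2\eta')$ that your tail-integral treatment of $[\ol x,a]$ produces and then has to absorb.
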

\begin{proof}
 For every $\eta',\rho>0$, 
 $$Q(e^{-\eta' V(0,0)})=Q\left(e^{-\eta ' V(0,0)}\left(\ch_{\{-V(0,0)\le \rho\}}+\ch_{\{-V(0,0)>\rho\}}\right)\right).$$
 Let $\eta_1$ be such that $G^{inv}(2\eta)\ge \ol{x}$ for all $\eta \ge \eta_1$. Let $\eta' \ge \eta_1$ and let $\rho=G^{inv}(2\eta')$. Then, $\rho \ge \ol{x}$ and it follows that 
 \begin{align*}
 Q(e^{-\eta' V(0,0)})&\le e^{\eta' \rho}+1+\eta' \int_{\rho}^{\infty}Q(-V(0,0)>x) e^{\eta' x}dx\\
 & \le e^{\eta' G^{inv}(2\eta')}+1 +\eta' \int_{\rho}^{\infty} e^{-t\eta'(\frac{G(x)}{\eta'}-1)}dx.
\end{align*}
 For $x\ge \rho$,  $\frac{G(x)}{\eta'}-1\ge \frac{G(\rho)}{\eta'}-1=1$. Thus,  
 $$Q(e^{-\eta' V(0,0)})\le e^{\eta' G^{inv}(2\eta')}+1+\eta' \int_{\rho}^{\infty} e^{-x \eta'}dx = e^{\eta' G^{inv}(2\eta')}+1+e^{-\eta' G^{inv}(2\eta')}.$$
 Since $\lim_{x\to\infty} G^{inv}(x)=\infty$, the claim follows by choosing $\eta_0$ large enough.   
 \end{proof}
 We now construct the set of  paths discussed in the introduction. Below we write $\gamma_{t,x}$ meaning some path with the property $\gamma(t)=x$. If $|z-x|=1$, we write $\gamma_{t,x}\oplus z$ for the path $\gamma$ which coincides with $\gamma_{t,x}$ up to time $t$ and satisfies $\gamma(t+1)=z$. For every $x\in \Z^d,c\in \{1,\dots,d\}$ and $r=\pm 1$, we let $x^{c,r}=(x^{c,r}_1,\dots,x^{c,r}_d)\in \Z^d$ satisfy $x^{c,r}_c=x_c+r$ and $x^{c,r}_k = x_k$ for all  $k\ne c$.
 Let $S_0=\{0\}$, $c_0=1$ and $\tilde \Gamma_0=\{\gamma_{0,0}\}$, for some path $\gamma_{0,0}$. We continue inductively: 
 \begin{enumerate}
 \item\label{big_cycle}  Let $l_t=\min\{x_{c_t}:x\in S_t\}$.
 \item\label{small_cycle} Let $x\in S_t$. If $x_{c_t}=l_t$, we let $\gamma_{t+1,x^{c_t,-1}}=\gamma_{t,x}\oplus \{x^{c_t,-1}\}$ and $\gamma_{t+1,x^{c_t,+1}}=\gamma_{t,x}\oplus \{x^{c_t,+1}\}$. 
 Otherwise, let $x^*=x^{c_t,\sign(x_{c_t}-l_t)}$ and set $\gamma_{t,x^*}=\gamma_{t,x}\oplus \{x^*\}$. We set $S_{t+1}=\{x^{c_t,\pm 1}:x \in S_t\}$ and let
 $\tilde \Gamma_{t+1}=\{\gamma_{t+1,x}:x\in S_{t+1}\}$. 
  If $l_t+3\le\max\{x_{c_t}:x\in S_{t+1}\}$, then we set $l_{t+1}=l_t+3$, $c_{t+1}=c_t$ and return to step (\ref{small_cycle}), starting from time $t+1$. Otherwise, we let $c_{t+1}=(c_t \mod d)+1$ and return to step (\ref{big_cycle}), starting from time $t+1$. 
\end{enumerate}
 Figure 1 illustrates the construction of $\tilde \Gamma_t$ in one dimension. The horizontal axis is the time axis starting from $t=0$ on the left. The vertical axis is the space axis with $x=0$ in the center. For each time $t$, $S_t$ is represented by the round nodes on the corresponding vertical line. The large  nodes represent the value of $l_t$. The paths in $\tilde \Gamma_t$ are obtained by following the solid lines from left to right from time $0$ to time $t$.\\
 \begin{figure}[h]
 \includegraphics{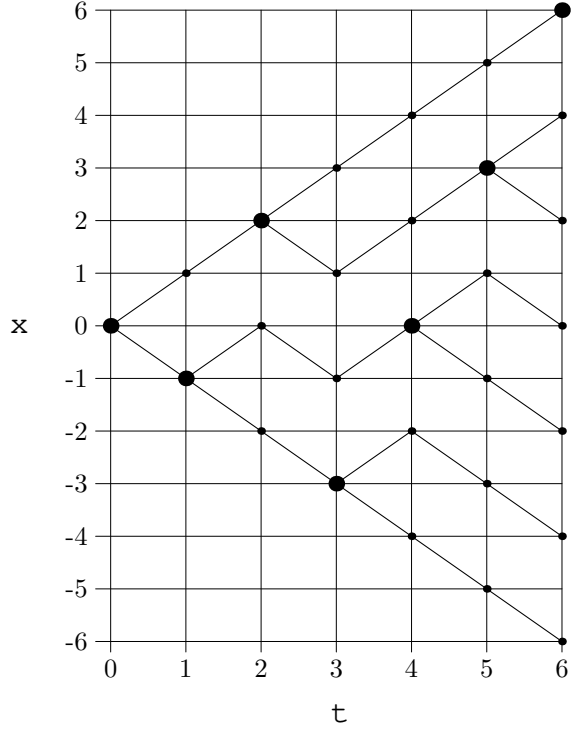}
\caption{$\tilde \Gamma$ in one dimension}
\end{figure}
 For $t'\ge t$, we let $n_{t'}(t,x)=|\{\gamma\in \tilde\Gamma_{t'}:\gamma(t)=x\}|$.  Set $\sigma_0=0$ and let $\sigma_{k+1}=\min\{t\ge \sigma_k:c_t=1\}$. Starting from time $\sigma_k$, in each step of the construction we double one of the hyperplanes of $S_{\sigma_k}$. We begin with all hyperplanes orthogonal to $(1,0,\dots,0)$ then all hyperplanes orthogonal to $(0,1,0,\dots)$, until we finish with all hyperplanes orthogonal to $(0,\dots,1)$. We repeat the process again at time $\sigma_{k+1}$, doubling all hyperplanes of $S_{\sigma_{k+1}}$. Between time $\sigma_k$ and $\sigma_{k+1}$ we double each of the side lengths of $S_{\sigma_k}$. Since $S_{0}$ is a cube of side length $1$, it follows that $S_{\sigma_k}$ is a cube of side length $2^k$. In addition, this shows that $\sigma_{k+1}-\sigma_k=d2^k$. Thus, $\sigma_k = d \sum_{j=0}^{k-1} 2^j = d (2^k-1)$. We also note that given some $s$ and $j$ such that $\sigma_j \le s < \sigma_{j+1}$, any point in $S_s$ will be split into at most $2^d$ points by time $\sigma_{j+1}$. Fix now $t'\ge t$ and let $k'$ be such that $\sigma_{k'}\le t'<\sigma_{k'+1}$.  Thus,   $n_{t'}(t,x)$ is bounded above by $2^{d(1+k'-k)}$. Now $t' \ge \sigma_{k'}=d(2^{k'}-1)$ and $t < \sigma_{k+1}=d(2^{k+1}-1)$. 
 Therefore, $k'-k \le (\log_2((t'+d)/d)-\log_2((t+d)/d)+1)= \log_2 ((t'+d)/(t+d))+1$. Hence, $n_{t'}(t,x) \le 2^{2d} (\frac{t'+d}{t+d})^d$. In addition, $|S_t|\ge |S_{\sigma_k}|=2^{d k}\ge 2^{d (\log_2((t+d)/d)-1)}=(2d)^{-d}(t+d)^d$. We summarize what we have proved in the following lemma: 
\begin{lemma}~
\label{lem:ntx}
 \begin{enumerate}
 \item If $t'\ge 1$, then 
 $n_{t'}(t,x) \le (4+4d)^d (\frac{t'}{1+t})^d$; 
 \item For all $t\ge 0$, $|S_t|\ge (2d)^{-d}(t+d)^d$.  
  \item Combining the above two estimates  we obtain: 
 $$\frac{n_{t'}(t,x)}{|S_{t'}|}\le \frac{(4d)^{2d}}{(1+t)^d}.$$
 \end{enumerate}
 \end{lemma}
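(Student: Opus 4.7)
The plan is to formalize the counts already sketched in the paragraph preceding the lemma by analyzing the construction epoch-by-epoch. Define the epochs to be the intervals $[\sigma_k,\sigma_{k+1})$, and exploit the two basic facts established in the preamble: $S_{\sigma_k}$ is a cube of side length $2^k$ with $\sigma_k=d(2^k-1)$, and any point of $S_s$ for $\sigma_j\le s<\sigma_{j+1}$ acquires at most $2^d$ descendants in $S_{\sigma_{j+1}}$ (one splitting per coordinate direction during the epoch). Both parts of the lemma then reduce to translating statements about the epoch index $k$ into statements about the time $t$ via the relation $\sigma_k=d(2^k-1)$.

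For part (i), fix $t\le t'$ and let $k,k'$ satisfy $\sigma_k\le t<\sigma_{k+1}$ and $\sigma_{k'}\le t'<\sigma_{k'+1}$. Tracking the descendants of $x$ epoch-by-epoch, a point in $S_t$ generates at most $2^d$ descendants in $S_{\sigma_{k+1}}$, at most $2^{d(k'-k-1)}$ further descendants in $S_{\sigma_{k'}}$, and at most $2^d$ further descendants in $S_{t'}$, so
$$n_{t'}(t,x)\le 2^{d(k'-k+1)}.$$
Next, $\sigma_{k+1}>t$ gives $2^{k+1}>(t+d)/d$, while $\sigma_{k'}\le t'$ gives $2^{k'}\le (t'+d)/d$, so $2^{k'-k}\le 2(t'+d)/(t+d)$, i.e.\ $2^{k'-k+1}\le 4(t'+d)/(t+d)$. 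The inequalities $t+d\ge 1+t$ and $t'+d\le(1+d)t'$ (for $t'\ge 1$) then collapse this bound to $2^{k'-k+1}\le 4(1+d)t'/(1+t)$, giving the factor $(4+4d)^d$ after raising to the $d$-th power.

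For part (ii), observe that for $\sigma_k\le t$ one has $|S_t|\ge|S_{\sigma_k}|=2^{dk}$, and $\sigma_{k+1}>t$ gives $2^k>(t+d)/(2d)$, whence $|S_t|\ge (2d)^{-d}(t+d)^d$ (with the convention that the strict inequality in the integer setting suffices after absorbing the rounding into the constant). Part (iii) is then an immediate multiplication of the bounds from (i) and (ii): using $t'/(t'+d)\le 1$ and $1+d\le 2d$ (for $d\ge 1$), the resulting constant $(4+4d)^d(2d)^d$ is dominated by $(4d)^{2d}=16^d d^{2d}$.

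There is no real obstacle here beyond bookkeeping: the entire argument is a direct consequence of the epoch structure already unpacked above the lemma statement, and the only subtleties are (a) remembering to pick up one extra factor of $2^d$ at each end of the interval $[t,t']$ when transitioning in and out of full epochs, and (b) keeping the constants clean enough to land on the stated values $(4+4d)^d$, $(2d)^{-d}$, and $(4d)^{2d}$.
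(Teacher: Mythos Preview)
Your proof is correct and follows essentially the same approach as the paper: the argument in the paragraph preceding the lemma already derives $n_{t'}(t,x)\le 2^{d(1+k'-k)}$ and $|S_t|\ge 2^{dk}$ from the epoch structure, and then converts these via $\sigma_k=d(2^k-1)$ exactly as you do. The only differences are cosmetic (you make the partial-epoch factors $2^d$ at each end explicit, and you spell out the constant comparison $(4+4d)^d(2d)^d\le(4d)^{2d}$ via $1+d\le 2d$), so there is nothing substantively new to compare.
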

We need the following lemma, which is an adaptation of the results of \cite[Section 2]{CGM}. Its proof is given in the appendix.\par
 \begin{lemma}
 \label{lem:channels}
 Assume that (AS0)-(AS2) hold. Then, for every $\epsilon >0$ there exist $W(\epsilon)\in\N$ and $c(\epsilon)>0$ such that  
$$Q( E_0 [\exp(H(T))\ch_{\{\max_{s\in \{0,\dots,T\}}|\gamma(s)|\le W\}}] \ge e^{(\lambda -\epsilon) T})\ge 1-e^{-c T},$$
 for all $T$ sufficiently large. 
\end{lemma}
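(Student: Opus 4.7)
My plan is to reduce the unrestricted partition function to a box-confined, loop-pinned partition function, establish that its free energy converges to $\lambda$ as the box grows, and then amplify convergence in probability to the required exponential bound via a block-independence argument. Introduce
$$Z_W^\circ(T) \;=\; E_0\bigl[e^{H(T)}\ch_{\{\gamma(T)=0\}}\ch_{\{\max_{0\le s\le T}|\gamma(s)|\le W\}}\bigr].$$
Since loops on the disjoint time slabs $[0,T_1)$ and $[T_1,T_1+T_2)$ use disjoint i.i.d.\ field values, $Z_W^\circ$ is super-multiplicative:
$$Z_W^\circ(T_1+T_2)\;\ge\;Z_W^\circ(T_1)\cdot\theta_{T_1}Z_W^\circ(T_2).$$
Using a fixed back-and-forth loop $\gamma_0$ in the $l^1$-ball of radius $1$, one has the crude bound $Z_W^\circ(T)\ge (2d)^{-T}\exp\bigl(\sum_{t<T}V(t,\gamma_0(t))\bigr)$, whose expectation is finite by (AS2). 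Kingman's subadditive ergodic theorem then produces the deterministic limit
$$\lambda_W\;:=\;\lim_{T\to\infty}\frac{1}{T}\ln Z_W^\circ(T)\;=\;\sup_T\frac{1}{T}\ex\ln Z_W^\circ(T)\;\in\;(-\infty,\lambda],\quad Q\text{-a.s.}$$

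The crucial intermediate step is to show $\lambda_W\nearrow\lambda$ as $W\to\infty$. This is a classical fact in polymer theory (see e.g.\ \cite{CSY}): one couples $Z(T)$ with $Z_W^\circ(T+\tau)$ by closing any realization of the unrestricted polymer into a loop via a deterministic return trip of length $\tau\sim W$ that stays in the box, and uses (AS2) to control the entropic cost of this return. Letting $W\to\infty$ with $\tau/T\to 0$ shows that the box-confinement loss vanishes at the level of free energies. This will be the main obstacle: a careful path-surgery argument leaning on both (AS2) and the FKG inequality.

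Given $\epsilon>0$, I would choose $W=W(\epsilon)$ with $\lambda_W\ge \lambda-\epsilon/3$, then $T_0=T_0(\epsilon)$ with $\ex\ln Z_W^\circ(T_0)\ge(\lambda-\epsilon/2)T_0$. For $T=kT_0+r$ with $0\le r<T_0$, split $[0,T]$ into $k$ consecutive length-$T_0$ blocks plus a short remainder. Loop concatenation and the obvious inclusion $Z_W\ge Z_W^\circ$ (extended by a back-and-forth tail) give
$$Z_W(T)\;\ge\;\Bigl(\prod_{i=1}^k Z_{W,i}^\circ(T_0)\Bigr)\cdot R,$$
where the $Z_{W,i}^\circ(T_0)$ use disjoint time slabs and are therefore i.i.d.\ copies of $Z_W^\circ(T_0)$, and $R$ is a remainder bounded from below by the same back-and-forth construction and (AS2). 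Since $\ln Z_W^\circ(T_0)$ has an exponentially integrable lower tail (via the back-and-forth bound again), Cram\'er's large-deviation theorem applied to $\sum_{i=1}^k\ln Z_{W,i}^\circ(T_0)$ yields
$$Q\Bigl(\sum_{i=1}^k\ln Z_{W,i}^\circ(T_0)\;<\;(\lambda-\epsilon)T\Bigr)\;\le\; e^{-c(\epsilon)T},$$
which is precisely the claimed exponential bound. Once the convergence $\lambda_W\to\lambda$ is in hand, this final Cram\'er step is routine.
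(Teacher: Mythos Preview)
Your overall plan is sound and, in its amplification step, matches the paper's: once one has a fixed ``good block'' (pinned, box-confined, free energy per unit length $\ge\lambda-\epsilon/2$ with positive probability), concatenating i.i.d.\ copies along the time axis and applying a Cram\'er-type bound yields the exponential estimate. The paper does exactly this, with $X_k=E_0[\exp(H(kL,(k+1)L))\ch_{C_{kL,(k+1)L,W}(0)}]$ in the role of your $Z_{W,i}^\circ(T_0)$.

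The gap is in $\lambda_W\to\lambda$, which you correctly flag as the main obstacle but whose sketch does not work. Closing the unrestricted length-$T$ polymer into a loop by a deterministic return of length $\tau$ requires $\tau\ge|\gamma(T)|$, which can be as large as $T$; hence either $W,\tau\ge T$ (then $\tau/T\not\to 0$ and the return cost $\sim T\ln(2d)$ swallows a fixed fraction of the free energy, yielding only $\lambda_W\ge(\lambda-O(1))/2$), or you must already know that confinement to radius $W\ll T$ is cheap, which is precisely the statement in question. The reference \cite{CSY} gives the existence of $\lambda$ but not this box-confinement result. The paper fills the gap with a navigation construction (its Lemma~\ref{lem:goodblock}): by FKG, restricting each length-$T$ sub-block to endpoints lying on the side of the origin costs nothing at exponential scale; choosing at each stage the maximizing such endpoint forces $|x_k^\ast|\le T$ throughout $R$ consecutive sub-blocks, and a single final block of length $T$ returns the walk to $0$. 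This produces a pinned block of length $(R+1)T$ confined to radius $2T$, with the return fraction $1/(R+1)$ as small as one likes---the quantitative form of $\lambda_W\to\lambda$ your surgery sketch is missing. Your instinct that FKG is the right tool is correct; the point is that it must be used iteratively to steer the polymer toward the origin, not merely once to close a loop.
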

  We now fix some $\epsilon>0$ and some $M:\Z_+\to \Z_+$. We write $\tilde\Gamma$ as a shorthand notation for  $\tilde \Gamma_{M(T)}$,   $n(t,x)$ as a shorthand notation for $n_{M(T)}(t,x)$. In addition, for $A\subset \tilde \Gamma$,  we let $n_A(t,x)=|\{\gamma\in A:\gamma(t)=x\}|$. For $x\in S_{M(T)}$ we let $\tilde \gamma_x$ denote the unique path in $\tilde \Gamma$ with $\tilde \gamma(M(T))=x$.  Let  $r(\epsilon)=1-\frac{1}{4(8W(\epsilon))^d}$, where $W(\epsilon)$ is as in Lemma \ref{lem:channels}. We define  
 $$G_1=\{x\in S_{M(T)}:H_{\tilde \gamma_{x}}(M(T))\ge -\epsilon T\}\text{ and }E=\{|G_1|\ge r|S_{M(T)}|\}.$$
 We have 
\begin{lemma}
 \label{lem:markov}
  Let $K=4(4d)^{2d}$ and let ${\tilde \eta}:\Z_+\to \R_+$. Suppose that 
 \begin{enumerate}
 \item $\lim_{T\to\infty} M(T)=\infty$;
 \item ${\tilde \eta}(T)\ge 2\eta_0 M(T)^d$, for all $T$ sufficiently large. 
 \end{enumerate}
 Then there exists a constant $T_0\in \Z_+$, depending only on $r$ and $M$ such that  
\begin{equation*}
  Q(E^c)\le\exp \left(2^d M(T)^d \ln 2 + {\tilde \eta}(T) I_{{\tilde \eta}}^M(T)-\frac{(1-r)\epsilon }{K}T{\tilde \eta}(T)\right),\text{ for all }T\ge T_0. 
 \end{equation*}
%Q(\ol{x}^c) \le 2^{2^dM(T)^d} \prod_{t=0}^{M(T)-1} \left(Q(e^{-{\tilde \eta} V(0,0)/(1+t)^d})\right)^{(1+t)^d}e^{-\frac{\epsilon(1-r)}{K} {\tilde \eta} T}.
%\end{equation*}
\end{lemma}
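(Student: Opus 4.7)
The plan is to combine a union bound over ``witness'' subsets $A\subset S_{M(T)}$ of fixed size with the exponential Chebyshev inequality, exploiting the crucial identity $\sum_{y\in L_t}n_A(t,y)=|A|$ (each path sits at one site at each time), which will collapse the otherwise intricate product over $(t,y)$ into something depending only on $t$.

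Set $m=\lceil(1-r)|S_{M(T)}|\rceil$. On $E^c$ we have $|G_1^c|\ge m$, so $E^c\subset\bigcup_{|A|=m}\{A\subset G_1^c\}$, with at most $2^{|S_{M(T)}|}\le 2^{(1+M(T))^d}\le\exp(2^dM(T)^d\ln 2)$ terms once $M(T)\ge 1$; this produces the first summand in the target bound. For a fixed $A$, since $x\in A\subset G_1^c$ forces $-H_{\tilde\gamma_x}(M(T))>\epsilon T$,
$$\{A\subset G_1^c\}\subset\Big\{\sum_{t=0}^{M(T)-1}\sum_{y\in L_t}n_A(t,y)(-V(t,y))>m\epsilon T\Big\}.$$
Choose $\eta$ proportional to $\tilde\eta(T)/M(T)^d$ in such a way that (i) hypothesis (2) gives $\eta\ge\eta_0$, and (ii) Lemma \ref{lem:ntx}(i) yields $2\eta n_A(t,y)\le\tilde\eta(T)/(1+t)^d$ after absorbing the constant $(4+4d)^d$. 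Exponential Chebyshev together with Lemma \ref{lem:expmom2} (applicable to every factor since $\eta\ge\eta_0$ and $n_A(t,y)\ge 1$ whenever nonzero) and independence of $V$ give
$$Q(A\subset G_1^c)\le e^{-\eta m\epsilon T}\prod_{t,y}\exp\bigl(2\eta n_A(t,y)\,G^{inv}(2\eta n_A(t,y))\bigr).$$

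Monotonicity of $G^{inv}$ now replaces its argument by $\tilde\eta(T)/(1+t)^d$, and the collapsing identity $\sum_{y\in L_t} n_A(t,y)=m$ turns each time slice into $\exp(2\eta m\,G^{inv}(\tilde\eta(T)/(1+t)^d))$. Summing in $t$, and using $2\eta m\le\tilde\eta(T)$ up to a combinatorial constant, the total contribution is at most $\exp(\tilde\eta(T)I_{\tilde\eta}^M(T))$. Lemma \ref{lem:ntx}(ii) supplies $m\ge(1-r)(2d)^{-d}M(T)^d$, which converts the Markov drift $-\eta m\epsilon T$ into $-(1-r)\epsilon T\tilde\eta(T)$ divided by a constant. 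All combinatorial constants from Lemma \ref{lem:ntx}(i)--(ii) and the factor $2$ from Lemma \ref{lem:expmom2} are absorbed into the single crude choice $K=4(4d)^{2d}$; multiplying by the union-bound factor from the previous paragraph yields the claimed inequality.

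The main delicate point is the constant-chasing required to make the coefficient of $\tilde\eta(T)I_{\tilde\eta}^M(T)$ in the exponent exactly $1$, which forces the particular scaling $\eta\sim\tilde\eta(T)/M(T)^d$ and motivates the generous $K=4(4d)^{2d}$. Conceptually, the whole argument hinges on the uniformity built into $\tilde\Gamma$: no single $V(t,y)$ is over-weighted in the sum $\sum_x(-H_{\tilde\gamma_x}(M(T)))$, which is exactly the content of Lemma \ref{lem:ntx}.
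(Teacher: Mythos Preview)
Your overall structure---union bound over witness subsets $A$ followed by exponential Chebyshev---matches the paper's, but the key middle step is handled differently and, as written, contains a real gap. You apply Lemma~\ref{lem:expmom2} directly to each factor $Q(e^{-\eta n_A(t,y)V(0,0)})$; since $n_A(t,y)$ can equal $1$ (for instance whenever a single path of $A$ passes through $(t,y)$, which certainly happens for $t$ near $M(T)$), this forces $\eta\ge\eta_0$. On the other hand, your requirement (ii) that $2\eta n_A(t,y)\le\tilde\eta(T)/(1+t)^d$, together with the worst-case bound $n_A(t,y)\le n(t,y)\le (4+4d)^d\bigl(M(T)/(1+t)\bigr)^d$ from Lemma~\ref{lem:ntx}(i), forces $\eta\le\tilde\eta(T)/\bigl(2(4+4d)^dM(T)^d\bigr)$. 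Under hypothesis (2) this upper bound is only guaranteed to exceed $\eta_0/(4+4d)^d$, which is strictly smaller than $\eta_0$. Thus no single $\eta$ satisfies both (i) and (ii); the phrase ``after absorbing the constant $(4+4d)^d$'' is exactly where the argument breaks.

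The paper avoids this tension by reversing the order of the two operations. It first uses the monotonicity of $\mu\mapsto Q(e^{-\mu V(0,0)})$ on $[0,\infty)$ (a consequence of $Q(V(0,0))=0$ via Jensen) to replace each exponent coefficient $\eta'\,n_A(t,x)/|S_{M(T)}|$ by the larger, $x$-independent quantity $\tilde\eta(T)/\bigl(2(1+t)^d\bigr)$ (using Lemma~\ref{lem:ntx}(iii)), and \emph{then} applies Lemma~\ref{lem:expmom2}; hypothesis (2) is calibrated precisely so that this uniform quantity is $\ge\eta_0$ for every $t<M(T)$. The product over $x$ then involves $|S_t|\le(1+t)^d$ identical factors, and the $(1+t)^d$ cancels to give the coefficient $\tilde\eta(T)$ in front of $I_{\tilde\eta}^M(T)$ exactly. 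Your collapsing identity $\sum_y n_A(t,y)=m$ is elegant but becomes unnecessary once the monotonicity step has removed the $n_A$-dependence; if you wish to keep your route, you would need the stronger assumption $\tilde\eta(T)\ge 2(4+4d)^d\eta_0 M(T)^d$ in place of (2).
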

\begin{proof}
 Clearly, 
\begin{align*}
  Q(E^c) &\le \sum\limits_{A\subset \tilde \Gamma,|A|=\f{(1-r)|S_{M(T)}|}}
 Q(\underset{\gamma\in A}{\cap}\{-H_{\gamma}(M(T))> \epsilon T\})\\ 
 &\le
 \sum_{A\subset\tilde \Gamma,|A|=\f{(1-r)|S_{M(T)}|}}
 Q( \sum_{\gamma\in A} -H_{\gamma}(M(T))> \epsilon |A| T).
\end{align*}
We note that 
 $-\sum_{\gamma\in A} H_{\gamma}(M(T)) =-\sum_{t=0}^{M(T)-1}\sum_x n_A(t,x) V(t,x)$. Since $|A|=\f{|S_{M(T)}|(1-r)}$ and $\lim_{T\to\infty} M(T)=\infty$, there exists a positive constant $T_0$ such that  $|A|\ge \frac{|S_{M(T)|}(1-r)}{2}$ for all $T\ge T_0$. Therefore,  
\begin{align*}
 \{ -H_{\gamma}(M(T))> \epsilon |A| T \}&= \{-\sum_{t=0}^{M(T)-1}\sum_x \frac{n_A(t,x)}{|S_{M(T)}|} V(t,x)>\frac{ \f{|S_{M(T)}|(1-r)} \epsilon T}{|S_{M(T)}|} \}\\
&\subseteq \{-\sum_{t=0}^{M(T)-1}\sum_x \frac{n_A(t,x)}{S_{M(T)}} V(t,x)>\frac{(1-r)\epsilon T}{2}\}
\end{align*}  
Hence, 
$$Q(E^c) \le \sum_{A\subset \tilde\Gamma,|A|= \f{(1-r)|S_{M(T)}|}}
 Q( -\sum_{t=0}^{M(T)-1}\sum_x \frac{n_A(t,x)}{|S_{M(T)}|} V(t,x) > \frac{(1-r)\epsilon T}{2}).$$
 Using the Markov inequality, 
 $$Q( -\sum_{t=0}^{M(T)-1}\sum_x \frac{n_A(t,x)}{|S_{M(T)}|} V(t,x) >\frac{(1-r)\epsilon T}{2})\le \left(\prod_{t=0}^{M(T)-1} \prod_x Q(e^{-\eta' \frac{n_A(t,x)}{|S_{M(T)}|}V(0,0)})\right)e^{-\frac{\eta' (1-r)\epsilon T}{2}}$$
  for all $\eta'>0$. 
 Since $Q(V(0,0))=0$, Jensen's inequality implies that the mapping $\mu \to Q(e^{-\mu V(0,0)})$ in non-decreasing on $[0,\infty)$. Therefore, we may replace $n_A(t,x)$ on the righthand side above above with the larger number $n(t,x)$ to obtain a looser upper bound for the lefthand side. However, by Lemma \ref{lem:ntx}-(iii), $\frac{n(t,x)}{S_{M(T)}}\le \frac{(4d)^{2d}}{(1+t)^d}=\frac{K/4}{(1+t)^d}$.   Thus, 
\begin{equation}
\label{eq:jensen}
 1\le Q(e^{-\eta' \frac{n_A(t,x)}{|S_{M(T)}|}V(0,0)})\le Q(e^{-\frac{\eta' K/4 }{(1+t)^d}V(0,0)}).
\end{equation}
The righthand side is independent of the choice of $A$. Since the  number of possible choices for $A$ is equal to $\binom{|S_{M(T)}|}{\f{(1-r)|S_{M(T)}|}}<2^{|S_{M(T)}|}\le 2^{(1+M(T))^d}\le 2^{2^d M(T)^d}$, we get  
\begin{align*}
Q(E^c) &\le 2^{2^d M(T)^d} \prod_{t=0}^{M(T)-1} \left(Q(e^{-\frac{\eta' K/4}{(1+t)^d}V(0,0)})\right)^{|S_t|}e^{-\frac{\eta' (1-r)\epsilon T}{2}}\\
 &\le  2^{2^d M(T)^d} \prod_{t=0}^{M(T)-1} \left(Q(e^{-\frac{\eta' K/4}{(1+t)^d}V(0,0)})\right)^{(1+t)^d}e^{-\frac{\eta' (1-r)\epsilon T}{2}},
\end{align*}
 where the second inequality is due to \eqref{eq:jensen}. 
 Let now $\eta'= 2{\tilde \eta}(T) /K$. For all $t\in \{0,\dots,M(T)-1\}$,
 $$\frac{\eta' K/4}{(1+t)^d}=\frac{{\tilde \eta}(T)/2}{(1+t)^d}\ge\frac{{\tilde \eta}(T)/2}{M(T)^d}\ge \eta_0,$$
 where we have used (ii) to obtain the last inequality. It follows from Lemma \ref{lem:expmom2} that 
\begin{align*}
 Q(E^c)&\le \exp (2^d M(T)^d \ln 2+\sum_{t=0}^{M(T)} \frac{{\tilde \eta}(T)}{(1+t)^d} G^{inv}(\frac{{\tilde \eta}(T)}{(1+t)^d})(1+t)^d-\frac{{\tilde \eta}(T)(1-r)\epsilon T}{K})\\
 &\le \exp (2^d M(T)^d \ln 2 + {\tilde \eta}(T) I_{{\tilde \eta}}^M(T)-\frac{(1-r)\epsilon }{K}T{\tilde \eta}(T))
\end{align*}
\end{proof} 
 Once we have obtained control over the contribution near the beginning, we are ready to combine the estimates with the independence argument: 
\begin{lemma}
\label{lem:glueing} 
 Suppose that 
\begin{enumerate}
 %\item\label{cond1} 
 \item There exists a function $J:\Z_+\to \R_+$ such that $\ln Q(E^c)\le - J(T)$ for all  sufficiently large $T$; 
 \item $\limsup_{T\to\infty} \frac{\max(\lambda,1)M(T)}{T}<\epsilon$.
 \end{enumerate} 
  Then, there exists a constant $C_{\infty}>0$ depending only on  $\epsilon,d$ and the distribution of $V(0,0)$ such that 
 $$-\ln  Q( Z(T)\le e^{(\lambda-4\epsilon)T})\ge \frac 12 \min(C_{\infty} T M(T)^d,J(T)),$$
  for all sufficiently large $T$. 
\end{lemma}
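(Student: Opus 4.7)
The plan is to realize the ``independence argument'' of the introduction by using $E$ from Lemma \ref{lem:markov} as a supplier of open initial segments and Lemma \ref{lem:channels} as a supplier of productive tails, then glue the two via the Markov property of the simple random walk.

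First, inside the slice $S_{M(T)}$ I pack $N$ points $y_1,\dots,y_N\in S_{M(T)}$ whose pairwise $l^1$-distances exceed $2W+1$, where $W=W(\epsilon)$ is produced by Lemma \ref{lem:channels}; the cubes $y_j+[-W,W]^d$ are then disjoint. By Lemma \ref{lem:ntx}\,(ii), $|S_{M(T)}|\ge (2d)^{-d}(M(T)+d)^d$, hence $N\ge c_1 M(T)^d$ for some $c_1=c_1(d,\epsilon)>0$. The numerical value $r=1-\tfrac{1}{4(8W)^d}$ in the definition of $E$ is tuned precisely so that on $E$ at most $(1-r)|S_{M(T)}|\le N/4$ centers can fall outside $G_1$, and therefore $\mathcal{J}:=\{j:y_j\in G_1\}$ satisfies $|\mathcal{J}|\ge 3N/4$ on $E$.

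Second, for each $j$ let $F_j$ be the event that the shifted, spatially restricted partition function
\begin{equation*}
 E_{y_j}\!\left[\exp\!\left(\sum_{s=0}^{T-M(T)-1} V(s+M(T),\gamma'(s))\right)\ch_{\{\max_s|\gamma'(s)-y_j|\le W\}}\right]
\end{equation*}
is at least $e^{(\lambda-\epsilon)(T-M(T))}$. Translation invariance of the field together with Lemma \ref{lem:channels} gives $Q(F_j^c)\le e^{-c_2(T-M(T))}\le e^{-c_2 T/2}$ for $T$ large (condition (ii) forces $M(T)\le T/2$ eventually). Since $F_j$ depends only on $\{V(t,x):t\ge M(T),\,x\in y_j+[-W,W]^d\}$, the $F_j$ are mutually independent and jointly independent of the $\sigma$-algebra generated by $\{V(t,x):t<M(T)\}$, which contains $E$ and $\mathcal{J}$. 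Consequently, conditionally on $E$, the probability that $F_j$ fails for every $j\in\mathcal{J}$ is at most $e^{-(c_2 T/2)(3N/4)}\le e^{-C_\infty M(T)^d T}$ for some $C_\infty>0$ depending only on $\epsilon,d$ and the law of $V(0,0)$.

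Third, on $E\cap\bigl(\bigcup_{j\in\mathcal{J}}F_j\bigr)$ fix such a $j$ and set $x=y_j$. Restricting the expectation defining $Z(T)$ to walks whose first $M(T)$ steps equal $\tilde\gamma_x$ (probability $(2d)^{-M(T)}$) and whose remaining steps stay in $x+[-W,W]^d$, the Markov property yields
\begin{equation*}
 Z(T)\ge (2d)^{-M(T)}\,e^{H_{\tilde\gamma_x}(M(T))}\,e^{(\lambda-\epsilon)(T-M(T))}\ge (2d)^{-M(T)}\,e^{-\epsilon T}\,e^{(\lambda-\epsilon)(T-M(T))}.
\end{equation*}
Condition (ii) forces $(\lambda+\ln 2d)M(T)=O(\epsilon T)$, so routine algebra shows the right-hand side exceeds $e^{(\lambda-4\epsilon)T}$ for $T$ large (with the factor $4$ absorbing the $d$-dependent constants $\ln 2d$). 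A union bound then gives $Q(Z(T)\le e^{(\lambda-4\epsilon)T})\le Q(E^c)+e^{-C_\infty M(T)^d T}\le 2\exp\bigl(-\min(J(T),C_\infty M(T)^d T)\bigr)$, and absorbing $\ln 2$ into the prefactor $\tfrac12$ gives the claimed bound.

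The principal obstacle is the bookkeeping in step one: one must carefully verify that the numerical proportion $r=1-1/(4(8W)^d)$ appearing in the definition of $E$ is exactly matched to the packing density of the chosen $y_j$-arrangement so that a fixed positive fraction of the $y_j$ lies in $G_1$ on $E$. The remainder (independence from disjoint field coordinates, the Markov decomposition, and the union bound) is essentially routine.
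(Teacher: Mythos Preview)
Your proof is correct and follows essentially the same strategy as the paper: pack well-separated centers in $S_{M(T)}$, use Lemma~\ref{lem:channels} to make each center's tail good with high probability, and exploit the calibrated value $r=1-\tfrac{1}{4(8W)^d}$ to guarantee that on $E$ enough centers lie in $G_1$. The one structural difference is how the two time-slices are combined: the paper introduces a deterministic event $F=\{|G_2|\ge |A|/2\}$ that is independent of $E$, bounds $Q(F^c)$ by a binomial estimate, and then on $E\cap F$ uses inclusion--exclusion ($|G_1\cap G_2|\ge |G_1|+|G_2|-|S_{M(T)}|>0$) to locate a single usable center; you instead condition on the first slice, work with the random set $\mathcal{J}$, and bound the conditional probability that \emph{every} $F_j$ with $j\in\mathcal{J}$ fails. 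Your route is marginally more direct (no auxiliary event $F$, no pigeonhole), but both arrive at the same bound. Note also that your final ``routine algebra'' with $(2d)^{-M(T)}$ does not literally yield the constant $4$ for all $d$ from condition~(ii) alone; the paper has the same looseness (it writes $2^{-M(T)}$), and in either case this is a cosmetic constant issue that does not affect the argument.
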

\begin{proof}
  Let $W$ and $c$ be as in Lemma \ref{lem:channels}. Let $A=\{x\in S_{M(T)}:x=4Wk,~k\in \Z^d\}$.
  Clearly, 
  $$|S_{M(T)}|\ge |A|\ge \frac{|S_{M(T)}|}{(8W)^d} $$
  We let  
 $$G_2=\{x\in A: E_x \left[\exp(H(M(T),T))\ch_{\{\max_{s\in \{0,\dots,T\}}|\gamma(s)|\le W\}}\right]\ge e^{(\lambda -\epsilon)(T-M(T))}\},$$
  and $F=\{|G_2|\ge \frac{|A|}{2}\}$.
 Due to condition (ii), $\lambda M(T)< \epsilon T$ for all $T$ sufficiently large. Therefore, 
 $$(\lambda -\epsilon)(T-M(T))\ge (\lambda -\epsilon)T-\epsilon T=(\lambda-2\epsilon)T.$$
 This gives
 \begin{equation}
 \label{eq:NMTMT}
   E_x \left[\exp(H(M(T),T))\ch_{\{\max_{s\in \{0,\dots,T\}}|\gamma(s)|\le W\}}\right]\ge e^{(\lambda -2\epsilon)T},\text{ for all }x\in G_2.
 \end{equation} 
By the spacing assumption on $A$ and the definition of $G_2$, the indicators $\{\ch_{G_2}(x)\}_{x\in A}$ form an IID sequence of Bernoulli trials. By Lemma \ref{lem:channels}, for every $x\in A$, $Q(\ch_{G_2}(x)=0)\le e^{-c(T-M(T))}$. However, by condition (ii), $T-M(T)\ge (1-\epsilon)T$, for all sufficiently large $T$. Therefore, letting $c'=(1-\epsilon)c/2$, we obtain $Q(\ch_{G_2}(x))\le e^{-2c' T}$. Next, note that 
\begin{align*}
 Q(F^c)&\le \sum_{A'\subset A,|A'|=\f{\frac {|A|}{2}}} Q(\sum_{x\in A'} \ch_{G_2}(x)=0)\le \binom{|A|}{\f{|A|/2}}e^{-2c'T \f{|A|/2}}\\
& \le  2^{|A|}e^{-c' T |A|/2}\le e^{-c' T |S_{M(T)}|/(8W)^d},~\text{ for all sufficiently large }T.
\end{align*}
By Lemma \ref{lem:ntx}, $|S_{M(T)}|\sim M(T)^d$, therefore there exists a constant $C_{\infty}>0$ depending only on $\epsilon,d$ and the distribution of $V(0,0)$ such that  $Q(F^c) \le e^{-C_{\infty}TM(T)^{1+d}}$ for all sufficiently large $T$. By definition, $F$ and $E$ are independent. Hence, 
$$ Q(E \cap F)\ge 1-e^{-\frac 12 \min(C_{\infty} T M(T)^d,J(T))},~\text{ for all sufficiently large }T.
$$
 On $F$, $|G_2|\ge \frac 12 |A| \ge \frac {|S_{M(T)}|}{2(8W)^d}$. Therefore, recalling that $r= 1-\frac{1}{4(8W)^d} $, 
$$|G_1 \cap G_2|= |G_1|+|G_2|-|G_1\cup G_2|\ge |S_{M(T)}| (1-\frac {1}{4(8W)^d})+\frac {1}{2(8W)^d}-1=\frac{|S_{M(T)}|}{4(8W)^d}>0.$$
   In particular, $|G_1\cap G_2|\ne \emptyset.$ 
Since
  $$Z(T)\ge E_0 e^{H(M(T))}\ch_{G_1\cap G_2}(\gamma(M(T)))E_x e^{H(M(T),T)},$$ 
 it follows that on $E \cap F$,  
\begin{align*}
 Z(T)&\ge 2^{-M(T)}e^{-\epsilon T} e^{(\lambda-2\epsilon)T}\\ 
 &\ge\exp ((\lambda -3\epsilon-\frac{M(T)}{T}\ln 2)T)\ge e^{(\lambda-4\epsilon)T}, \text{ for all sufficiently large }T.
\end{align*}
\end{proof} 
 We are ready to prove the proposition:
 \begin{proof}[Proof of Proposition \ref{pr:ubprop}]
 Let $C_1 =\max(2\eta_0,1)$. Let $\delta \in (0,1/2)$ be such that
   \begin{align}
   \label{eq:deltachosen1}
     &\delta C (1+C_0)\le \frac{\epsilon(1-r)}{2K}.\\
     \label{eq:deltachosen2} 
      &\max(\lambda,1)\frac{2\delta C}{C_1^{1/d}}< \epsilon
   \end{align}
   Let 
 $$\tilde \eta = \delta^d \eta\text{ and }M_{\delta}(T)= \c{\left(\frac{\delta \tilde \eta(T)}{C_1}\right)^{1/d}}.$$
  Clearly, for sufficiently large $T$,  $M_{\delta}(T)^d \le 2C_1^{-1}\delta \tilde \eta(T)$. Thus, 
    $$\frac{\tilde \eta(T)}{M_{\delta}(T)^d} \ge (2\delta)^{-1} C_1\ge 1.$$
  By Lemma \ref{lem:lll}, 
  \begin{align*}
  I_{\tilde \eta}^{M_{\delta}}(T)&\le F_{\tilde \eta}^{M_{\delta}}(T)+C_0 \tilde \eta(T)^{1/d}\\ 
 &\le \delta \eta(T)\int_{G^{inv}(1)}^{G^{inv}(\eta(T))}G^{-1/d}(x)dx + C_0\delta \eta^{1/d} (T) =\delta F(\eta(T))+C_0 \delta \eta^{1/d}(T)\\
 & \le \delta C (1+C_0)T,\text{ for all sufficiently large }T.
\end{align*}
     Hence by \eqref{eq:deltachosen1}
      $$\frac{I_{\tilde \eta}^{M_{\delta}}(T)}{T} \le \frac{\epsilon(1-r)}{2K}.$$
      The choice of $\tilde \eta$ and  $M_{\delta}$ satisfies the conditions of Lemma \ref{lem:markov}. Therefore,
 \begin{align*}
 \label{eq:markovgives}
 Q(E^c)&\le\exp(T \tilde \eta(T)( \frac {2^d \delta \ln 2}{T}+\frac{I_{\tilde \eta}^{M_{\delta}}}{T}-\frac{\epsilon (1-r)}{K}))\\
   & \le e^{-\frac{\epsilon (1-r)}{2K}\delta^d T \eta(T)},\text{ for all sufficiently large }T.
 \end{align*}
   Let $J(T)=\frac{\epsilon (1-r)}{2K}\delta^d T \eta(T)$. 
  By the definition of $M_{\delta}(T)$ and the fact that $\delta <1$, it follows that $M_{\delta}(T) \le \frac{2\delta \eta(T)^{1/d}}{C_1^{1/d}}$ for all sufficiently large $T$. Therefore by (i), $M_{\delta}(T)<  \frac{2\delta C T }{C_1^{1/d}}$. By \eqref{eq:deltachosen2} we have
    $$\frac{\max(\lambda,1) M_{\delta}(T)}{T}<\epsilon.$$
     Therefore by Lemma \ref{lem:glueing} 
      $$R_{\epsilon}(T) \ge \frac 12 \min(C_{\infty}T M_{\delta}(T)^d,J(T)).$$
       The claim follows because $M_{\delta}(T)^d\sim \eta(T)$ and $J(T)\sim T \eta(T)$. 
  \end{proof}
\subsection{Upper Bound.}
\label{sec:upperbound}
 By assumption (AS3), there exists some $q\in (0,1]$ such that 
 \begin{equation}
 \label{eq:lbV00new}
 Q(-V(0,0)\ge t)\ge q e^{-t G(t)},~\text{ for all t}\ge 0.
 \end{equation}
 This observation turns out to be very convenient.
The main result of this section is the following: 
 \begin{prop}
\label{pr:lb}
  and let $\epsilon,C \in (0,\infty)$ be constants and let $\eta:\Z_+ \to \R_+$ and  $M:\Z_+\to \Z_+$. Suppose that the following conditions hold: 
 \begin{enumerate}
\item $M(T)< \eta(T)^{1/d}<T$; 
%\item $\lim_{T\to\infty} \eta(T)=\infty$;
% \item $\limsup_{T\to\infty} \frac{M(T)^{1+d}}{T\eta(T)}<C$.
\item $2\epsilon < \liminf_{T\to\infty} \frac{I_{\eta}^M(T)}{T} \le \limsup_{T\to\infty}  \frac{I_{\eta}^M(T)}{T} <C$;
\end{enumerate}
 Then 
 $$ \limsup_{T\to\infty} \frac{R_{\epsilon}(T)}{T \eta (T)}\le (\ln q^{-1}+C).$$
\end{prop}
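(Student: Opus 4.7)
The plan is to build a rare event on which $V$ takes very negative values everywhere in the ``cone'' $\{(t,x):0\le t<M(T),\ x\in L_t\}$, so that every path from the origin accumulates at least $I_\eta^M(T)$ in penalty during its first $M(T)$ steps, and independently to use Theorem~\ref{th:subadditive}-(\ref{subadd2}) to control the partition function on the time-slab $[M(T),T)$. Because the two events depend on $V$ at disjoint times they are independent, and multiplying probabilities will give the claim.

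Concretely, set $s_t:=G^{inv}\!\bigl(\eta(T)/(1+t)^d\bigr)$ and define
\[
E:=\bigcap_{t=0}^{M(T)-1}\bigcap_{x\in L_t}\{-V(t,x)\ge s_t\}.
\]
Any path $\gamma$ with $\gamma(0)=0$ satisfies $\gamma(t)\in L_t$, so on $E$ we have $H_\gamma(0,M(T))\le -I_\eta^M(T)$ for every such $\gamma$. Using \eqref{eq:lbV00new}, independence of the field across sites, and the identity $s_t G(s_t)=\eta(T)\,s_t/(1+t)^d$ (which collapses the $x$-product after bounding $|L_t|\le(1+t)^d$ in the exponent),
\[
Q(E)\ge q^{\sum_{t}|L_t|}\exp\!\bigl(-\eta(T)\,I_\eta^M(T)\bigr).
\]
Hypothesis~(ii) bounds $\eta(T)\,I_\eta^M(T)\le C\,T\eta(T)$. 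For the $q$-prefactor, $|L_t|\le(1+t)^d$ gives $\sum_{t=0}^{M(T)-1}|L_t|\le(1+o(1))(M(T))^{d+1}/(d+1)$, and combining $M(T)<\eta(T)^{1/d}$ with $\eta(T)^{1/d}<T$ from~(i) yields $(M(T))^{d+1}\le\eta(T)^{(d+1)/d}\le\eta(T)\,T$. Hence $\sum|L_t|\le (1+o(1))T\eta(T)/(d+1)\le T\eta(T)$, so $-\ln Q(E)\le\bigl(\ln q^{-1}+C+o(1)\bigr)T\eta(T)$.

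Next, fix $\epsilon'\in(0,\epsilon)$ and let $\tilde Z_y(s)$ denote the partition function starting at $y$ using the time-shifted field $V(M(T)+\cdot\,,\cdot)$; by stationarity $\tilde Z_y(s)$ is distributed as $Z(s)$. Set
\[
F:=\bigcap_{y\in L_{M(T)}}\bigl\{\tilde Z_y(T-M(T))\le e^{(\lambda+\epsilon')(T-M(T))}\bigr\}.
\]
Theorem~\ref{th:subadditive}-(\ref{subadd2}) and a union bound over the at most $(1+T)^d$ relevant endpoints give $Q(F^c)\le(1+T)^d e^{-c(T-M(T))}=o(1)$; the lower bound $I_\eta^M(T)\ge M(T)\,G^{inv}(1)$ together with (ii) forces $M(T)=O(T)$, and one may reduce to $M(T)\le T/2$ without loss of generality, since if $M(T)$ were comparable to $T$ then the penalty $I_\eta^M(T)>2\epsilon T$ from (ii) already yields the conclusion directly via the trivial estimate $Z(T)\le e^{-I_\eta^M(T)}\cdot e^{(\lambda+\epsilon')(T-M(T))}$. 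The events $E$ and $F$ depend on disjoint time-slabs of $V$, so they are independent. On $E\cap F$, the Markov property of the walk yields
\[
Z(T)\le e^{-I_\eta^M(T)}\max_{y\in L_{M(T)}}\tilde Z_y(T-M(T))\le\exp\!\bigl((\lambda+\epsilon')(T-M(T))-I_\eta^M(T)\bigr),
\]
which, using $I_\eta^M(T)>2\epsilon T$ and $\epsilon'<\epsilon$, is $\le e^{(\lambda-\epsilon)T}$ for all large $T$. Combining, $Q(Z(T)\le e^{(\lambda-\epsilon)T})\ge Q(E)Q(F)\ge(1-o(1))Q(E)$; taking $-\ln$, dividing by $T\eta(T)$ and passing to $\limsup$ proves the claim. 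The main obstacle is the arithmetic bookkeeping: verifying that the combinatorial sum $\sum(1+t)^d$ is bounded by $T\eta(T)$ with coefficient at most $1$ so that the advertised $\ln q^{-1}$ (rather than a larger multiple of it) comes out, and that the $\epsilon$--$\epsilon'$ margin correctly absorbs the subadditive large-deviation slack together with the $M(T)$-sized defect in the exponent.
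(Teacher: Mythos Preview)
Your proof is correct and follows essentially the same route as the paper's: force the field to be very negative on the entire initial cone $\{(t,x):0\le t<M(T),\,x\in L_t\}$ so that every path picks up penalty at least $I_\eta^M(T)>2\epsilon T$, and independently control the partition function on the slab $[M(T),T)$ via the upper-tail large deviations of Theorem~\ref{th:subadditive}-(\ref{subadd2}). The only differences are cosmetic: the paper uses FKG rather than a union bound to show $Q(B)\to 1$, takes $\epsilon'=\epsilon$ (which already suffices, since $(\lambda+\epsilon)T-2\epsilon T=(\lambda-\epsilon)T$), and bounds $\sum_t|L_t|$ more crudely by $M(T)^{1+d}\le\eta(T)^{1+1/d}\le T\eta(T)$ without the factor $1/(d+1)$; your digression about reducing to $M(T)\le T/2$ is unnecessary since the paper simply writes $(1-e^{-cT})^{(1+T)^d}\to1$ without further comment.
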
 
 Before proving the proposition, we obtain the following upper bounds: 
 \begin{lemma}~
 \label{lem:ubrate}
  \begin{enumerate}
 \item For every $\epsilon >0$, $\limsup_{T\to\infty} \frac{R_{\epsilon}(T)}{T^{1+d}}<\infty;$
 \item 
 $\limsup_{T\to\infty} \frac{R_{\epsilon}(T)}{TG(2\epsilon T)}<\infty$; 
\item  If $\epsilon \in (0,\frac 12)$ or if $f$ is non-increasing, then $\limsup_{T\to\infty} \frac{R_{\epsilon}(T)}{T G(T)}<\infty$.  
\end{enumerate}
\end{lemma}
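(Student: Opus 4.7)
The plan is to realize the upper-bound strategy sketched at the end of Section 1 and then optimize its parameters. Fix $\delta\in(0,\epsilon_0)$, with $\epsilon_0$ as in Theorem \ref{th:subadditive}-(\ref{subadd2}). For a parameter $\rho>0$ and a ``blocking time'' $\tau=\tau(T)\in\{1,\dots,T-1\}$, consider the event
$$B_\rho=\bigcap_{t=0}^{\tau-1}\bigcap_{x\in L_t}\{V(t,x)\le -\rho\}.$$
On $B_\rho$, every path $\gamma$ with $\gamma(0)=0$ satisfies $\gamma(t)\in L_t$ for $t<\tau$, hence $H_\gamma(\tau)\le-\rho\tau$, and therefore $Z(T)\le e^{-\rho\tau}E_0[e^{H(\tau,T)}]$. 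By the time-shift invariance of the IID field $V$, the factor $E_0[e^{H(\tau,T)}]$ depends only on $\{V(t,x):t\ge\tau\}$, is independent of $B_\rho$, and has the distribution of $Z(T-\tau)$. By Theorem \ref{th:subadditive}-(\ref{subadd2}), the event $\widetilde G=\{E_0[e^{H(\tau,T)}]\le e^{(\lambda+\delta)(T-\tau)}\}$ has probability $\ge 1/2$ once $T-\tau$ is large. On $B_\rho\cap\widetilde G$ one has $Z(T)\le \exp(-\rho\tau+(\lambda+\delta)(T-\tau))$, which is at most $e^{(\lambda-\epsilon)T}$ whenever
$$(\rho+\lambda+\delta)\,\tau\ \ge\ (\delta+\epsilon)\,T.\qquad(\star)$$

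Using $|L_t|\le(1+t)^d$ and applying \eqref{eq:lbV00new} coordinate-wise yields
$$Q(B_\rho)\ge\bigl(qe^{-\rho G(\rho)}\bigr)^{N_\tau},\qquad N_\tau=\sum_{t=0}^{\tau-1}|L_t|\le(1+\tau)^{1+d}.$$
By the independence of $B_\rho$ and $\widetilde G$, whenever $(\star)$ holds and $T$ is large one obtains the master inequality
$$R_\epsilon(T)\ \le\ N_\tau\bigl(\rho\,G(\rho)+\ln q^{-1}\bigr)+\ln 2,$$
from which each of (i)--(iii) follows by an appropriate choice of $\rho$ and $\tau$. For (i), fix a constant $\rho_0>0$ and take $\tau=\lceil(\delta+\epsilon)T/(\rho_0+\lambda+\delta)\rceil$; then $(\star)$ is satisfied, $\tau$ is of order $T$ and $N_\tau$ is of order $T^{1+d}$, while $\rho_0 G(\rho_0)$ is a constant, so the master inequality yields $R_\epsilon(T)=O(T^{1+d})$. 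For (ii), take $\rho=2\epsilon T$ and $\tau=1$: for $T$ large the left-hand side of $(\star)$ equals $2\epsilon T+O(1)\ge(\delta+\epsilon)T$, $N_\tau=1$, and the master inequality becomes $R_\epsilon(T)\le 2\epsilon T\,G(2\epsilon T)+\ln q^{-1}+\ln 2$.

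Part (iii) is then read off from (ii) by comparing $G(2\epsilon T)$ with $G(T)$. If $\epsilon<1/2$, monotonicity of $G$ gives $G(2\epsilon T)\le G(T)$. If $f$ is non-increasing and $2\epsilon>1$, then $f(2\epsilon T)\le f(T)$ for $T$ large, so $G(2\epsilon T)=(2\epsilon T)^d f(2\epsilon T)\le(2\epsilon)^d G(T)$. In either regime $TG(2\epsilon T)\le C_\epsilon\,TG(T)$, and (iii) follows.

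The only delicate point is the parameter choice in (ii): it is precisely because $\tau=1$ is admissible in $(\star)$---thanks to $\rho=2\epsilon T$ being large---that $N_\tau$ stays of order one and one pays the cost $\rho G(\rho)=2\epsilon T\,G(2\epsilon T)$ only once, rather than multiplied by a power of $T$. Everything else---the time-shift decomposition producing the independence of $B_\rho$ and $\widetilde G$, the coordinate-wise lower bound on $Q(B_\rho)$ coming from \eqref{eq:lbV00new}, and the black-box invocation of Theorem \ref{th:subadditive}-(\ref{subadd2}) for the tail $\widetilde G$---is routine.
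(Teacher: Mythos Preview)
Your overall strategy---force the field to be very negative on $\{(t,x):t<\tau,\,x\in L_t\}$, then invoke Theorem~\ref{th:subadditive}-(\ref{subadd2}) on the remaining time window---is exactly the paper's. The parameter choices for (i)--(iii) are also the right ones. But the step
\[
Z(T)\ \le\ e^{-\rho\tau}\,E_0\bigl[e^{H(\tau,T)}\bigr],\qquad\text{with }E_0\bigl[e^{H(\tau,T)}\bigr]\stackrel{d}{=}Z(T-\tau),
\]
is where the argument breaks. In the paper's notation $H_\gamma(\tau,T)=\sum_{t=\tau}^{T-1}V(t,\gamma(t-\tau))$, so $E_0[e^{H(\tau,T)}]$ is the partition function of a walk that \emph{restarts at the origin} at time~$\tau$; that object is indeed distributed as $Z(T-\tau)$, but it does \emph{not} dominate $e^{\rho\tau}Z(T)$. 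What actually appears after you bound $H_\gamma(\tau)\le-\rho\tau$ is $E_0\bigl[\exp\sum_{t=\tau}^{T-1}V(t,\gamma(t))\bigr]$, a walk that at time $\tau$ sits at a random point of $L_\tau$, not at $0$. By the Markov property this equals $\sum_{x\in L_\tau}P_0(\gamma(\tau)=x)\,E_x[e^{H(\tau,T)}]$, which is a convex combination of $|L_\tau|$ identically distributed but \emph{different} random variables, and is \emph{not} distributed as $Z(T-\tau)$. So you cannot apply Theorem~\ref{th:subadditive}-(\ref{subadd2}) to a single copy and be done.

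The fix is the one the paper uses: bound by $\max_{x\in L_\tau}E_x[e^{H(\tau,T)}]$ and control the event $B=\bigcap_{x\in L_\tau}\{E_x[e^{H(\tau,T)}]\le e^{(\lambda+\delta)(T-\tau)}\}$. Each event in the intersection is non-increasing in $V$, so FKG gives $Q(B)\ge(1-e^{-c(T-\tau)})^{|L_\tau|}$. For (ii)--(iii) with $\tau=1$ this is trivial ($|L_1|=2d$), which is why the paper's proof there just intersects over $|e|=1$. For (i) with $\tau$ of order $T$ one has $|L_\tau|\le(1+\tau)^d\le T^d$, and FKG (or a union bound, since the individual probabilities are exponentially small) still gives $Q(B)\to1$. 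Once you insert this step, your master inequality stands and the remainder of your argument goes through unchanged.
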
  
\begin{proof}
 We begin with (ii) and (iii).  Assume that $\epsilon>0$. 
 Let $A=\{-V(0,0)\ge 2\epsilon T\}$ and let 
 $$B=\cap_{|e|=1}\{E_e \exp H_{\gamma}(1,T)\ge e^{(\lambda+\epsilon)(T-1)}\}.$$ Hence, 
 \begin{equation}
  \label{eq:ZTsimple}
    Z(T) \le e^{-2\epsilon T+ (\lambda+\epsilon)(T-1)}\le e^{(\lambda-\epsilon)T},\text {on } A\cap B.
  \end{equation}
 It follows from Theorem \ref{th:subadditive}-(\ref{subadd2}) that $\lim_{T\to\infty} Q(B)=1$. Therefore, for all $T$ large enough, 
 $$Q(A \cap B)\ge \frac q2 e^{-2\epsilon T G(2\epsilon T)}.$$
 Thus (ii) follows from \eqref{eq:ZTsimple} and (iii) is an immediate consequence. 
% \begin{equation}
% \label{eq:slowrate}
%  R_{\epsilon}(T) = -\ln Q(Z(T)\le e^{(\lambda-\epsilon)T})\le 2\epsilon T G(T).
%\end{equation}]
\\
 To prove (i), we repeat the argument, redefining $A$ and $B$. Let $p=Q(-V(0,0)>4\epsilon)$. Note that by assumption $p\in (0,1)$. Let 
 $$A=\{-V(t,x)\ge  4\epsilon:t\in \{0, \c{T/2}-1\},x\in L_t\},$$ and   
 $$B=\cap_{x\in L_{\c{T/2}}}\{E_x e^{H( \c{T/2},T)}\le e^{(\lambda+\epsilon)(T-\c{T/2})}\}.$$
  Let $\gamma$ denote a random walk path with $\gamma(0)=0$. Then on the event $A$, 
 $$H_{\gamma}(\c{T/2})=\sum_{t=0}^{\c{T/2}-1} V(t,\gamma(t))\le -4\epsilon \c{T/2}\le -2\epsilon T.$$
  Thus, on $A\cap B$: 
 \begin{equation}
 \label{eq:ZTsimple2} 
Z(T)\le E_0 e^{H(\c{T/2})}\max_{x\in L_{\c{T/2}}}E_x e^{H(\c{T/2},T)}\le e^{-2\epsilon T}e^{(\lambda+\epsilon)(T-\c{T/2})}\le e^{(\lambda-\epsilon)T}.
 \end{equation}
We note that $B$ is the intersection of $|L_{\c{T/2}}|$ non-increasing events. Since $|L_{\c{T/2}}|<T^d$, it follows from the FKG inequality and Theorem \ref{th:subadditive}-(\ref{subadd2}) that 
  $$Q(B)\ge (1-e^{-c\c{T/2}})^{T^d}\ge 1-e^{-\frac c2 T}\underset{T\to\infty}{\to}1.$$
  We also have
   $$Q(A)\ge \prod_{t=0}^{ \c{T/2}-1} p^{|L_t|}\ge e^{-\ln p^{-1}\sum_{t=0}^{T}(1+t)^d}.$$
   Therefore there exists a constant $C_1>0$, depending only on $p$ and $d$ such that $Q(A)\ge e^{-C_1 T^{1+d}}$. Since the events $A$ and $B$ are independent, 
 $$ Q(A\cap B) \ge e^{-\frac{C_1}{2}T^{1+d}},\text{ for all sufficiently large }T.$$
 Thus, the claim follows from \eqref{eq:ZTsimple2}.
  \end{proof}
 We elaborate the argument in the above proof to obtain the following:

\begin{proof}[Proof of Proposition \ref{pr:lb}]
 Let
 $$A=\{-V(t,x)\ge G^{inv}(\frac{\eta(T)}{(1+t)^d}):x\in L_t,t\in \{0,\dots,M(T)-1\}\},$$
 and $$B=\cap_{x\in L_{M(T)}} \{E_x e^{H(M(T),T)}\le e^{(\lambda+\epsilon)(T-M(T))}\}.$$
 Due to condition (ii), on the event $A$
 $$H_{\gamma}(M(T))\le -\sum_{t=0}^{M(T)-1} G^{inv}(\frac{\eta(T)}{(1+t)^d})\le -2\epsilon T,$$
 for all paths $\gamma$ with $\gamma(0)=0$. In addition, $(\lambda+\epsilon)(T-M(T))\le (\lambda+\epsilon)T$. 
 Since 
$$Z(T)\le E_0 e^{H(M(T))}\max_{x\in L_{M(T)}}E_x e^{H(M(T),T)},$$
 it follows that 
\begin{equation}
 \label{eq:ZMTT} 
Z(T) \le e^{(\lambda -\epsilon)T}\text{ on }A\cap B.
\end{equation}
Next we estimate the probability of $A\cap B$ from below. First we observe that $B$ is an intersection of $|L_{M(T)}|$ identically distributed, non-increasing events. By (i), $|L_{M(T)}|\le (1+T)^d$. It follows from the FKG inequality and Theorem \ref{th:subadditive}-(\ref{subadd2}) that   
 \begin{equation}
 \label{eq:BMT}
 Q(B)\ge (1-e^{-c T})^{|L_{M(T)}|}\ge (1-e^{-cT})^{(1+T)^d}\underset{T\to\infty}{\to}1.
\end{equation}
 By \eqref{eq:lbV00new}, 
 \begin{align*}
 Q(A)&\ge \prod_{t=0}^{M(T)-1} \left(q \exp(-G^{inv}(\frac{\eta(T)}{(1+t)^d})G(G^{inv}(\frac{\eta(T)}{(1+t)^d})))\right)^{(1+t)^d}\\
 &\ge
 q^{M(T)^{1+d}} \exp(-\sum_{t=0}^{M(T)-1} G^{inv}(\frac{\eta(T)}{(1+t)^d})\frac{\eta(T)}{(1+t)^d}(1+t)^d)\\
 &=\exp(- M(T)^{1+d} \ln q^{-1}-\eta(T) I_{\eta}^M(T)).
\end{align*} 
 It follows from (i) that $M^{1+d}(T)\le \eta^{1/d+1}(T)\le T \eta(T)$. 
 By (ii),  $I_{\eta}^M(T)< C T$, for all sufficiently large $T$. Due to the independence of $A$ and $B$ and \eqref{eq:BMT},  
 $$Q(A\cap B)\ge \exp (-(\ln q^{-1}+C)T\eta(T)),\text{ for all sufficiently large T}.$$ 
 The claim follows from \eqref{eq:ZMTT}. 
\end{proof}
\subsection{Proof of  and Proposition \ref{pr:expo}, Theorem \ref{th:slow}, Theorem \ref{th:fast} and  Corollary \ref{cor:slowuseful}}
\begin{proof}[Proof of Proposition \ref{pr:expo}]
 Fix $\epsilon >0$ and let $W$ be as in Lemma \ref{lem:channels}. Clearly,  $Z(T)\ge  E_0 [\exp(H(T))\ch_{\{\max_{s\in \{0,\dots,T\}}|\gamma(s)|\le W\}}]$. Therefore $\liminf_{T\to\infty} \frac{R_{\epsilon}(T)}{T}>0$. 
 On the other hand, the argument in  Lemma \ref{lem:ubrate}-(ii) applies here as well, which shows that $\limsup_{T\to\infty} \frac{R_{\epsilon}(T)}{T}<\infty$.
\end{proof}
\begin{proof}[Proof of Theorem \ref{th:slow}]~\\
\noindent(i). 
 Suppose that $\gamma <\infty$. By Lemma \ref{lem:ubrate}-(iii), 
  $$\limsup_{T\to\infty} \frac{R_{\epsilon}(T)}{TG(T)}<\infty.$$
  Let $\eta(T)=G(T)$. We apply Proposition \ref{pr:ubprop}. Condition (ii) is satisfied because $\gamma<\infty$. Condition (i) is satisfied because $f$ is non-increasing.  Thus, the proposition gives
   $$\liminf_{T\to\infty} \frac{R_{\epsilon}(T)}{T G(T)}>0.$$
\noindent(ii). Suppose that $\gamma=\infty$. Let $\delta \in (0,\epsilon)$ and set $\eta(T)=G(\delta T)$. We wish to find a function 
 $M:\Z_+\to \Z_+$ such that $I_{\eta}^M(T)\in [2\epsilon,3\epsilon)$. 
 On the one hand, if $M\equiv 1$, then $I_{\eta}^M(T)=\delta T <\epsilon T$.
 On the other hand, if $M(T)=\f{\eta^{1/d}(T)}$,  then by Lemma \ref{lem:lll}, $I_{\eta}^M(T)\ge F(G(\delta T))-\delta T$, which shows that $I_{\eta}^M(T)/T \to \infty$ as $T\to\infty$. 
  For $m\in \N$, let  $I^{m}(T)=I_{\eta}^M$, where $M\equiv m$. 
 $$I^{m+1}(T)-I^{m}(T)= G^{inv}(\frac{G(\delta T)}{(1+m)^d})\le G^{inv}(G(\delta T))=\delta T<\epsilon T.$$
 It follows that for every $T$ sufficiently large, there exists a choice of $M$ such that $1\le M(T)<\f{G(\delta T)^{1/d}}$ and that $I_{\eta}^{M}\in [2\epsilon T,3\epsilon T)$. Therefore both conditions of Proposition \ref{pr:lb} are satisfied with $C=3\epsilon$ and it follows from the proposition that  
  $$\limsup_{T\to\infty} \frac{R_{\epsilon}(T)}{T G(\delta T)} \le \ln q^{-1}+3\epsilon.$$
   \end{proof}
\begin{proof}[Proof of Theorem \ref{th:fast}]
 \item Since $F$ is strictly increasing, continuous and has $F(G^{inv}(1))=0$, $\lim_{z\to\infty} F(z)=\infty$,  there exists $\eta_1:\Z_+\to \R_+$ such that $F(\eta_1(T))=3\epsilon T$. By definition,  
     $$\frac{\eta_1(T)^{1/d}(T)}{2\epsilon T}=\left(\int_{G^{inv}(1)}^{\eta_1(T)} G^{-1/d}(x) dx\right)^{-1}.$$
    Therefore, $\limsup_{T\to\infty} \frac{\eta_1(T)^{1/d}}{T}<\infty$.
    By Proposition \ref{pr:ubprop}
      $$\liminf_{T\to\infty} \frac{R_{\epsilon}(T)}{T \eta_1(T)}>0.$$
 Let $M(T)=\c{\eta(T)^{1/d}}$. It follows from Lemma \ref{lem:lll} that $I_{\eta_1}(T) \ge F(\eta_1(T))+\Delta(T)$. Since $f$ is non-decreasing, $\Delta(T)\ge 0$. Therefore, $I_{\eta_1}(T) \ge 3\epsilon T$. In addition, Lemma \ref{lem:lll} shows that $I_{\eta_1}(T) \le F_{\eta_1}^M(T)+C_0 \eta_1^{1/d}(T)$. Note that
 $$ F_{\eta_1}^M(T)=F(\eta_1(T))+\eta_1(T)^{1/d}\int_{G^{inv}(\eta(T)/M(T)^d)}^{G^{inv}(1)}G^{-1/d}(x)dx=F(\eta_1(T))+o(\eta(T)^{1/d}).$$
 Therefore,
  $$I_{\eta_1}^M \le F(\eta_1(T))+(1+C_0)\eta_1(T)^{1/d},$$
  for all $T$ sufficiently large. Thus, the conditions of  Proposition \ref{pr:lb} are satisfied and we have
        $$\limsup_{T\to\infty} \frac{R_{\epsilon}(T)}{T \eta_1(T)}<\infty.$$
        It is left to show that $\eta\sim \eta_1$. Equivalently, we need to show that $\limsup_{T\to\infty} \frac{\eta_1(T)}{\eta(T)}<\infty$ and $\limsup_{T\to\infty} \frac{\eta(T)}{\eta_1(T)}<\infty$.  We will only prove the first inequality, the argument being identical. We argue by contradiction. If there exists a sequence $t_k\nearrow \infty$ such that $\eta_1(t_k)\ge k^d\eta(t_k)$, then $ \frac{F(\eta_1(t_k))}{F(\eta(t_k))}\ge k\underset{k\to\infty}{\to}\infty$, contradicting the fact that  $F(\eta_1(T))\sim T \sim  F(\eta(T))$.
\end{proof}
\begin{proof}[Proof of Corollary \ref{cor:slowuseful}]
 Let $u(y)=f^{-1/d}(y),v(y)=\int_{G^{inv}(1)}^y G^{-1/d}(x)dx$.
  Note that 
 $$ \frac{v'(y)}{u'(y)}=\frac{G^{-1/d}(y)}{-\frac 1d f^{-(1+1/d)}(y)f'(y)}= -d \frac{f(y)}{y f'(y)}.$$
 Therefore $\lim_{y\to\infty} \frac{u'(y)}{v'(y)}=\frac{d}{\rho}$.  
 Since $\lim_{y\to\infty} u(y)=\infty$, $\lim_{y\to\infty} v(y)=\infty$, it follows from L'Hospital's rule that  $\gamma = \lim_{y\to\infty} \frac{v'(y)}{u'(y)}=d/\rho$. Therefore the first claim follows from Theorem \ref{th:slow}-(\ref{slow1}). To prove the second claim, assume that $\rho=0$. Since $f$ is convex, for all $y,\delta >0$,  $f(y)\ge f(\delta y)+(1-\delta)y f'(\delta y)$. Therefore, 
$$ \frac{G(y)}{G(\delta y)}=\frac{f(y)}{\delta^{d}f(\delta y)}\ge \delta^{-d} \left(1+\frac{1-\delta}{\delta}\frac{\delta y f'(\delta y)}{f(\delta y)}\right)\underset{y\to\infty}{\to}\delta^{-d}.$$
 This implies that $\limsup_{T\to\infty} \frac{G(\delta T)}{G(T)}\le \delta^{d}$. Therefore by Theorem \ref{th:slow}-(\ref{slow2}), 
 $$\limsup_{T\to\infty} \frac {R_{\epsilon}(T)}{TG(T)}=0.$$
\end{proof} 
\section*{Appendix}
All proofs in this section are carried out in one dimension, the extension to higher dimensions being immediate. \\
   For non-negative integers $L$ and $t_1\le t_2$, and for $x\in \Z^d$ we let 
\begin{equation*}
   C_{t_1,t_2,L}(x)=\{\gamma:\gamma(t_1)=\gamma(t_2)=x,~\max_{s\in\{t_1,\dots,t_2\}}|\gamma(s)-x|\le L\},
   \end{equation*}
   and 
 $$B_{t_1,t_2,L}(x)= \{t_1,\dots,t_2\}\times \{z\in \Z^d:|z-x| \le L\}.$$
  We say that $B_{t_1,t_2,L}(x)$  is $\epsilon$-good if 
   $$ E_x [\exp(H(t_1,t_2))\ch_{C_{t_1,t_2,L}(x)}]\ge e^{(\lambda -\epsilon)(t_2-t_1)},$$ 
 To prove Lemma \ref{lem:channels}, we build on the following: 
 \begin{lemma} \label{lem:goodblock}
 Assume that (AS0)-(AS2) hold.  For every $\epsilon,\delta>0$, one can choose  $L=L(\epsilon,\delta),~W=W(\epsilon,\delta)$ such that $Q(B_{0,L,W}(0)\text{ is }\epsilon-\text{good})>1-\delta$. In addition, for every fixed $\epsilon,\delta$ and a corresponding value of $W$, the ratio $L/W$ can be made arbitrarily large.   
\end{lemma}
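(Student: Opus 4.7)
The plan is a three-stage super-multiplicativity plus ergodic argument showing that paths pinned at the origin and confined to a box of radius $W$ achieve a free energy arbitrarily close to $\lambda$ once $W$ is taken large.

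Step 1 (in-box pinned free energy). For $L\in\Z_+$, $W\in\N$, introduce
\[
Z^0_W(L) \;=\; E_0\bigl[e^{H(L)}\,\ch_{C_{0,L,W}(0)}\bigr].
\]
Concatenating paths at the base point $0$ yields the super-multiplicative relation $Z^0_W(L_1+L_2)\ge Z^0_W(L_1)\cdot \witi Z^0_W(L_2)$, the two factors being independent since they are measurable with respect to disjoint time strips of $V$. Applying Kingman's subadditive ergodic theorem to $-\ln Z^0_W$, together with Fekete's lemma, shows that
\[
\lambda^0_W \;:=\; \lim_{L\to\infty}\tfrac{1}{L}\ln Z^0_W(L) \;=\; \sup_{T\ge 1}\tfrac{1}{T}\,Q[\ln Z^0_W(T)]
\]
exists $Q$-almost surely and is deterministic.

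Step 2 (recovery $\lambda^0_W\uparrow\lambda$). For each fixed $T$, $Z^0_W(T)$ is monotone nondecreasing in $W$ and converges to $Z^{0\to 0}(T):=E_0[e^{H(T)}\ch_{\{\gamma(T)=0\}}]$; by monotone convergence $Q[\ln Z^0_W(T)]\uparrow Q[\ln Z^{0\to 0}(T)]$. Swapping the two (monotone) suprema gives
\[
\lim_W \lambda^0_W \;=\; \sup_T \tfrac{1}{T}\,Q[\ln Z^{0\to 0}(T)].
\]
This limit is identified with $\lambda$. The upper bound $\le\lambda$ is immediate from $Z^{0\to 0}\le Z$ and Theorem \ref{th:subadditive}(\ref{subadd1}). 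The matching lower bound combines the decomposition $Z(T)=\sum_x Z^{0\to x}(T)$ with $|L_T|\le(1+T)^d$, the distributional identity $Z^{0\to x}(T)\stackrel{d}{=}Z^{0\to 0}(T)$ obtained from the translation invariance of the law of $V$, and a concentration estimate for $\ln Z^{0\to x}(T)$ around its mean (available under (AS2) by standard arguments for directed polymers with exponential-moment disorder) which absorbs the $\ln|L_T|$ term coming from $\max_x\ln Z^{0\to x}(T)$ into a $o(T)$ correction.

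Step 3 (concentration for the good event, and the $L/W$ conclusion). Given $\epsilon,\delta>0$, pick $W=W(\epsilon)$ via Step 2 so that $\lambda^0_W>\lambda-\epsilon/2$. The a.s.\ convergence from Step 1 then yields $L_0=L_0(\epsilon,\delta,W)$ with
\[
Q\bigl(\tfrac{1}{L}\ln Z^0_W(L) \ge \lambda-\epsilon\bigr) \;>\; 1-\delta \qquad \text{for every } L\ge L_0,
\]
which is exactly the statement $Q(B_{0,L,W}(0)\text{ is }\epsilon\text{-good})>1-\delta$. Since $W$ is fixed while $L\ge L_0$ is arbitrary, the ratio $L/W$ can be made as large as desired, giving the second assertion. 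The main obstacle is the $\le$ side of Step 2 --- namely, establishing that the pinned-and-unconfined free energy $\sup_T Q[\ln Z^{0\to 0}(T)]/T$ really equals $\lambda$ and not something strictly smaller; although translation invariance provides identically distributed summands $Z^{0\to x}$, converting this into a quenched lower bound requires the concentration step, where assumption (AS2) is used in an essential way.
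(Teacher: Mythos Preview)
Your subadditive--ergodic strategy (Steps~1 and~3) is a legitimate alternative to the paper's explicit construction, modulo the parity issue that $Z^0_W(L)=0$ for odd $L$, so Kingman must be run along the even integers. The real gap is Step~2. The identity $Z^{0\to x}(T)\stackrel{d}{=}Z^{0\to 0}(T)$ is \emph{false}: translation invariance of $V$ only gives $Z^{0\to x}(T)\stackrel{d}{=}Z^{a\to x+a}(T)$ and cannot change the displacement. Already the annealed means $Q[Z^{0\to x}(T)]=P_0(\gamma(T)=x)\,(Qe^{V(0,0)})^T$ depend on $x$, and for $|x|=T$ the quantity $Z^{0\to x}(T)$ is carried by a single path and has a visibly different law from $Z^{0\to 0}(T)$. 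Without the identity, your concentration step at best yields $\max_x Q[\ln Z^{0\to x}(T)]\ge(\lambda-o(1))T$, which says nothing about the particular endpoint $x=0$; the inequality $\sup_T T^{-1}Q[\ln Z^{0\to 0}(T)]\ge\lambda$ is therefore left unproved.

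The paper fills exactly this gap, but by a constructive ``navigation'' rather than a soft argument. It splits $[0,(R+1)T]$ into blocks of length $T$; an FKG symmetrization shows that in each block the best endpoint \emph{on the side facing the origin} contributes at least $(\lambda-3\epsilon)T$ with probability $\ge 1-\sqrt{\delta'}$. Iterating keeps every intermediate endpoint $x_k^*$ within distance $T$ of $0$, so one last block can return to $0$ at cost $O(T)$, negligible once $R$ is large. This produces the good block with $W=2T$, $L=(R+1)T$, and $R$ free. If you prefer to salvage your outline, the correct symmetry is $Z^{0\to x}(T)\stackrel{d}{=}Z^{x\to 0}(T)$ (spatial translation followed by reflection), which together with concatenation gives $Q[\ln Z^{0\to 0}(2T)]\ge 2\max_x Q[\ln Z^{0\to x}(T)]$; combined with a quantitative concentration bound strong enough to beat the union over $O(T^d)$ endpoints this does recover $\sup_T T^{-1}Q[\ln Z^{0\to 0}(T)]=\lambda$, but each of these ingredients must actually be established rather than asserted.
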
 
 \begin{proof}
 Let $U_1$ and $U_2$ be two identically distributed monotone functions of $V$ and let $K>0$ be a constant. Suppose that 
  $$Q(U_1+U_2 \ge K)\ge 1-\delta',$$
   for some $\delta'>0$. 
    Then $$Q(U_1+U_2 \ge K)\le Q(U_1 \ge K/2\text{ or } U_2 \ge K/2)=2Q(U_1 \ge K/2)-Q(U_1\ge K/2,U_2\ge K/2).$$
      By the FKG inequality, we obtain  
  $$1-\delta' \le Q(U_1+U_2 \ge K)\le Q(U_1 \ge K/2)(2-Q(U_1\ge K/2)),$$ 
 from which it follows that 
  $$Q(U_1 \ge K/2) \ge 1-\sqrt{\delta'}.$$
  By Theorem \ref{th:subadditive}-(\ref{subadd1}),    
  $$Q(Z(T) > e^{(\lambda-\epsilon) T})>1-\delta',\text{ for sufficiently large }T.$$  
 Let $U_1=E_0 \exp(H(T))\ch_{\{\gamma(T)\ge 0\}}$, $U_2=E_0 \exp(H(T))\ch_{\{\gamma(T)\le 0\}}$. Clearly, $U_1$ and $U_2$ are identically distributed monotone functions of $V$ and $Q(U_1+U_2\ge e^{(\lambda-\epsilon)T})\ge 1-\delta'$. 
Thus, 
 \begin{equation} 
   \label{eq:cornerstone} 
   Q(E_0  \exp(H(T))\ch_{\{\gamma(T)\ge 0\}}> e^{(\lambda-2\epsilon)T})>1-\sqrt{\delta'} \text{ for sufficiently large }T.
\end{equation} 
  Below, we denote by ${\cal G}_n$ the $\sigma$-algebra generated by $\{V(t,x): (t,x)\in \{0,\dots,n\}\times \Z\}$. 
 Set $x_0^*=0$ and let  $x_1^*$ be a measurable function of ${\cal G}_{T-1}$ with the property 
 $$E_0 \exp(H(T)) \ch_{\{\gamma(T)=x_1^*\}}=\max_{x\ge 0}E_0 \exp (H(T)) \ch_{\{\gamma(T)=x\}}.$$ 
 Now 
   $$E_0 \exp(H(T)) \ch_{\{\gamma(T)=x_1^*\}}\ge  
   \frac {1}{T+1} E_0  \exp(H(T))\ch_{\{\gamma(T)\ge 0\}}.$$ 
Thus, it follows from \eqref{eq:cornerstone} that  
 $$Q(E_0 \exp(H(T))\ch_{\{\gamma(T)=x_1^*\}}>e^{(\lambda-3\epsilon)T})>1-\sqrt{\delta'}\text{ for sufficiently large }T.$$
 We define the function $\sign:\Z \to \{-1,1\}$ be letting $\sign(z)=1$ if and only if $z>0$.    We continue the construction by induction. Having defined $x_k^*$, we let  $x_{k+1}^*$ be a measurable function of ${\cal G}_{(k+1)T-1}$ with the properties 
\begin{enumerate}
\item \label{cond:navigate} 
 If $x_k^*\ge 0$, then $x_{k+1}^*\le x_k^*$. Otherwise, $x_{k+1}^*\ge x_k^*$.  
 \item 
 \label{cond:maximize}  
\begin{align*} 
 E_{x_k^*}&\exp(H(kT,(k+1)T))\ch_{\{\gamma(T)=x_{k+1}^*\}}\\ 
= 
      &\max_{\{x\in \Z: (x_k^*-x)\sign(x_k^*)\ge 0\}}E_{x_k^*} \exp(H(kT,(k+1)T))\ch_{\{\gamma(T)=x\}}. 
 \end{align*} 
 \end{enumerate} 
 Note that condition (\ref{cond:navigate}) guarantees that $|x_k^*|\le T$ for all $k$. 
 Our construction also satisfies that on $\{x_k^*=l\}$, $E_{x_k^*} \exp (H(kT,(k+1)T))\ch_{\{\gamma(T)=x_{k+1}^*\}}$ has the same distribution as $E_0 \exp (H(T))\ch_{\{\gamma(T)=x_1^*\}}$. In particular, 
 \begin{align*} 
 Q&(E_{x_k^*}\exp (H(kT,(k+1)T))\ch_{\{\gamma(T)=x_{k+1}^*\}}\ge e^{(\lambda -3\epsilon)T})\\ 
 &=\sum_{l}Q(\{x_k^*=l\}\cap \{E_l \exp (H(kT,(k+1)T))\ch_{\{\gamma(T)=x_{k+1}^*\}}\ge e^{(\lambda -3\epsilon)T}\}). 
 \end{align*} 
 However, the event $\{E_l \exp (H(kT,(k+1)T))\ch_{\{\gamma(T)=x_{k+1}^*\}}\ge e^{(\lambda -2\epsilon)T}\}$ depends only on $\{V(t,x):t\ge kT,x\in \Z\}$, whereas $x_k^*\in {\cal G}_{kT-1}$. Therefore we conclude that  
 $$ Q(E_{x_k^*}\exp (H(kT,(k+1)T))\ch_{\{\gamma(T)=x_{k+1}^*\}}\ge e^{(\lambda -3\epsilon)T})\ge 1-\sqrt{\delta'}.$$ 
 For $R\in\N$,  let $Z_R = E_0 \exp (H(RT))\prod_{k=1}^R \ch_{\{\gamma(kT)=x_k^*\}}$. By the Markov property,  
 $$Z_R = E_0 \prod_{k=0}^{R-1} E_{x_k^*} \exp(H(kT,(k+1)T))\ch_{\{\gamma(T)=x_{k+1}^*\}}.$$ 
 Since $Q\left(\cup_{k=0}^{R-1}\left\{E_{x_k^*} \exp(H(kT,(k+1)T))\ch_{\{\gamma(T)=x_{k+1}^*\}}<e^{(\lambda -3\epsilon)T}\right\}\right)\le R \sqrt{\delta'}$, it follows that 
  \begin{equation} 
   \label{eq:ZR} 
    Q(Z_R>e^{(\lambda-3\epsilon)RT})\ge 1-R \sqrt{\delta'}. 
     \end{equation} 
 We also observe that due to the fact that $|x_k^*|\le T$, all paths $\gamma$ considered in the expectation defining $Z_R$ satisfy $\max_{j\in \{0,\dots,RT\}}|\gamma(j)|\le 2T$. Therefore,   
 $$\ch_{C_{0,(R+1)T,2T}(0)}\ge \ch_{\{\gamma(0)=0\}}\ch_{\{\gamma((R+1)T)=0\}}\prod_{k=1}^R \ch_{\{\gamma(kT)=x_k^*\}}.$$ 
 This implies that 
 \begin{equation} 
  \label{eq:block_lowbound} 
   E_0\exp (H((R+1)T))\ch_{C_{0,(R+1)T,2T}(0)} \ge Z_R \min_{z\in L_{RT},~|z|\le T} W_z, 
   \end{equation} 
  where $W_z =E_z \exp (H(RT,(R+1)T))\ch_{\{\gamma(T)=0\}}.$ 
  We assume from now that $T$ is even.  For $z\in L_{RT}$ with $|z|\le T$,  let $\gamma_z$ denote an arbitrary path with $\gamma_z(0)=z,~\gamma_z(T)=0$. Since $T$ is even, there exists such a path.  Clearly, $W_z \ge 2^{-T} e^{H_{\gamma_z}(RT,(R+1)T)}$. Hence, 
 $$\min_{z\in L_{RT},|z|\le T} W_z \ge 2^{-T} \exp (\min_{z\in L_{RT},|z|\le T} H_{\gamma_z}(RT,(1+R)T)).$$
  Since $H_{\gamma_z}(RT,(1+R)T)$ and $\sum_{k=0}^{T-1} V(k,0)$ are identically distributed, we have 
   $$Q(\min_{z\in L_{RT},|z|\le T} H_{\gamma_z}(RT,(1+R)T) \le -\epsilon T) \le (1+T) Q(-\sum_{k=0}^{T-1} V(k,0) \ge \epsilon T).$$
  Since $Q(V(0,0))=0$, it follows that for all $|\mu|$ small enough, $Q(e^{-\mu V(0,0)})\le e^{c\mu^2}$, for some $c\le Q(V(0,0)^2)$. Hence,
\begin{equation}
 \label{eq:minildp}
 Q(-\sum_{k=0}^{T-1} V(k,0) \ge \epsilon T)\le e^{c T\mu^2}e^{-\mu\epsilon T} \le e^{-c'T},
\end{equation}
for some $c'>0$, depending only on $\epsilon,c$ and $\mu$. Consequently, 
 $$Q(\min_{z\in L_{RT},|z|\le T} H_{\gamma_z}(RT,(1+R)T) \le -\epsilon T)\le e^{-cT},\text{ for all sufficiently large }T.$$
   It follows from \eqref{eq:ZR} and \eqref{eq:block_lowbound} that  
     $$Q(E_0\exp (H((R+1)T))\ch_{C_{0,(R+1)T,2T}(0)}\ge e^{(\lambda -3\epsilon)RT-(\epsilon+\ln 2) T})\ge 1-R\sqrt{\delta'}-e^{-c T}.$$
 The first statement of the lemma follows by adjusting $R$ and $\delta'$ appropriately and setting  $L=(T+1)R$ and $W=2T$. The second statement follows from the fact that for every $T$, $R$ can be arbitrarily large.   
\end{proof} 
%%% Long channel lemma. 
\begin{proof}[Proof of Lemma \ref{lem:channels}]
  By Lemma \ref{lem:goodblock} we may choose $\delta$ sufficiently small and $W$ and $L$ sufficiently large such that  $Q(B_{0,L,W}(0)\text{ is }\epsilon\text{-good})\ge 1-\delta$.  We will choose $\delta$ and $L$ as function of $\epsilon$ which will be determined later, taking values in the even positive integers.  At the moment we only require $\eta\equiv \epsilon -2\delta \ln 2$ be strictly positive.
 Let $X_k = E_0 \exp(H(kL,(k+1)L))\ch_{C_{kL,(k+1)L,W}(0)}$. Let 
  $$A=\{\exists B \subset \{0,\dots, n-1\},~|B|\le 2\delta n,~\prod_{k\in B} X_k \le e^{-\epsilon nL}\}.$$ 
 Let $\gamma$ be any path with $\gamma(kL)=0$ for all $k$. 
 Then $X_k \ge 2^{-L} \exp (\sum_{j=kL}^{(k+1)L-1} V(j,\gamma(j)))$.
 Therefore for every $B$, 
 $$\{\prod_{k\in B} X_k < e^{-\epsilon n L}\}\subset 
   \{\sum_{k\in B}\left( -L\ln 2 + \sum_{j=kL}^{(k+1)L-1}V(j,\gamma(j))\right)\le -\epsilon n L\}.$$
 We obtain  
 $$Q(\{\prod_{k\in B} X_k < e^{-\epsilon n L}\})\le Q(E_{|B|}),\text{ where }E
_{|B|}=
  \{-\sum_{j=0}^{L|B|-1}V(j,0)\ge  \eta n L \}.$$
  By the Markov inequality, for every $\mu>0$,  
 $$Q(E_{|B|})\le Q(e^{-\mu V(0,0)})^{L|B|}e^{-\mu \eta n L}\le Q(e^{-\mu V(0,0)})^{2\delta n L}e^{-\mu \eta nL},$$
 where in the last inequality we have used the fact that $Q(e^{-\mu V(0,0)})\ge 1$ that $|B|\le 2\delta n$.  By \eqref{eq:minildp}, it follows that 
 by choosing $\mu$ sufficiently small, there exists a constant $c_1>0$, depending only on $\eta$ and $\delta$ such that $Q(E_{|B|}) \le e^{-c_1 n L}$. 
 Consequently, 
 $$Q(A)\le \sum_{B\subset \{0,\dots,n-1\},~|B|\le 2\delta n}Q(E_{|B|}) \le
 \sum_{k=1}^{\f{2\delta n}} \binom{n}{k} e^{-c_1 nL}\le 2^{n}e^{-c_1 nL}.$$
 By letting $L\ge \frac {2}{c_1} \ln 2$, we obtain 
 $$Q(A)\le e^{-n \ln 2}.$$
 Let 
 $$C=\{\exists G\subset \{0,\dots,n-1\}:|G|\ge (1-2\delta)n,\text{ such that for all }k\in G,~ X_k\ge e^{(\lambda-\epsilon)L}\}.$$
 The event $C$ is the event that the number of successes in $n$ IID Bernoulli trials is at least $(1-2\delta)n$, where a success in the $k$'th trial is the event $\{X_k \ge e^{(\lambda-\epsilon)L}\}$. By definition of the $X_k$'s, the probability of success is bounded below by $1-\delta$. 
 Therefore, there exists a constant $c_2>0$, depending only on $\delta$ such that 
 $$Q(C) \ge 1-e^{-c_2 n}.$$
 Since $A^c$ and $C$ are non-decreasing events, it follows that $Q(A^c\cap C) \ge 1-e^{-\frac 12 \min(c_1,c_2)n}$. We now require that $(\lambda-\epsilon)(1-2\delta)\ge \lambda-2\epsilon$. This can be achieved by choosing $\delta$ sufficiently small. With such a choice, on  $A^c\cap C$ 
 $$ \prod_{k=0}^{n-1} X_k \ge e^{(\lambda-\epsilon)(1-2\delta)nL}e^{-\epsilon n L}\ge e^{(\lambda-3\epsilon)nL}.$$
 Finally, 
 $$ Z(nL)= E_0 e^{H(nL)}\ge \prod_{k=0}^{n-1} X_k,$$
 completing the proof for  $T$ of the form $nL$.  The extension to all large $T$ is simple and will be omitted.  
\end{proof}
\subsection*{Acknowledgment} I would like to thank Mike Cranston, Demian Gauthier and Thomas Mounford for showing me \cite{CGM} prior to its publication. 
\bibliographystyle{amsalpha}
\bibliography{ldp_pf}
\end{document}